\documentclass[10pt]{article}
\usepackage{latexsym}
\usepackage{amssymb}
\usepackage{amsmath}
\usepackage{amsthm}
\usepackage{thmtools}
\usepackage{cite}
\usepackage[all]{xy}
\usepackage{framed}
\usepackage{hyperref}
\usepackage{graphicx}
\usepackage{parskip}
\usepackage{hyperref}

\begingroup
    \makeatletter
    \@for\theoremstyle:=definition,remark,plain\do{%
        \expandafter\g@addto@macro\csname th@\theoremstyle\endcsname{%
            \addtolength\thm@preskip\parskip
            }%
        }
\endgroup

\makeatletter
\xydef@\txt@ii#1{\vbox{\vspace*{-5pt}%
 \let\\=\cr
 \tabskip=\z@skip \halign{\relax\hfil\txtline@@{##}\hfil\cr\leavevmode#1\crcr}}}
\makeatother

\theoremstyle{definition}
\newtheorem{thm}{Theorem}[section]
\newtheorem{lem}[thm]{Lemma}
\newtheorem{cor}[thm]{Corollary}
\newtheorem{defn}[thm]{Definition}
\newtheorem{propn}[thm]{Proposition}
\newtheorem*{thm*}{Theorem}

\newtheorem*{nts}{Note to self}

\theoremstyle{remark}
\newtheorem*{rk}{Remark}

\newtheoremstyle{custthm}{\parskip}{}{\normalfont}{}{\bfseries}{.}{ }{\thmname{#1} \thmnote{#3}}
\theoremstyle{custthm}
\newtheorem*{letterthm}{Theorem}

\newcommand{\nio}{\mathrm{nio}}

\newcommand{\tensor}[1]{\underset{#1}{\otimes}}

\newcommand{\nopen}{\underset{O}{\lhd}}

\newcommand{\Sat}{\mathrm{Sat}\,}

\newcommand{\Aut}{\mathrm{Aut}}

\newcommand{\fn}{\mathbf{FN}_p}
\newcommand{\Rinfty}{\mathbb{R}\cup\{\infty\}}
\newcommand{\actson}{\mathbin{\text{\rotatebox[origin=c]{-90}{$\circlearrowright$}}}}

{%
\begin{framed}\begin{nts}%
}%
{%
\end{nts}\end{framed}%
}


\begin{document}

\binoppenalty=\maxdimen
\relpenalty=\maxdimen

\title{On the structure of virtually nilpotent compact $p$-adic analytic groups}
\author{Billy Woods}
\date{\today}
\maketitle
\begin{abstract}
Let $G$ be a compact $p$-adic analytic group. We recall the well-under\-stood finite radical $\Delta^+$ and FC-centre $\Delta$, and introduce a $p$-adic analogue of Roseblade's subgroup $\nio(G)$, the unique largest orbitally sound open normal subgroup of $G$. Further, when $G$ is nilpotent-by-finite, we introduce the finite-by-(nilpotent $p$-valuable) radical $\fn(G)$, an open characteristic subgroup of $G$ contained in $\nio(G)$. By relating the already well-known theory of isolators with Lazard's notion of $p$-saturations, we introduce the isolated lower central (resp. isolated derived) series of a nilpotent (resp. soluble) $p$-valuable group of finite rank, and use this to study the conjugation action of $\nio(G)$ on $\fn(G)$. We emerge with a structure theorem for $G$,
$$1 \leq \Delta^+ \leq \Delta \leq \fn(G) \leq \nio(G) \leq G,$$
in which the various quotients of this series of groups are well understood. This sheds light on the ideal structure of the Iwasawa algebras (i.e. the completed group rings $kG$) of such groups, and will be used in future work to study the prime ideals of these rings.
\end{abstract}

\newpage
\tableofcontents

%
%
%
%
%
%

\newpage

\section*{Introduction}

We aim to study the structure of certain compact $p$-adic analytic groups $G$. This will crucially underpin later work in which we will explore the ring-theoretic properties of completed group rings $kG$, where $k$ is a finite field of characteristic $p$ and $G$ is a nilpotent-by-finite compact $p$-adic analytic group.

There is much in common between the theory of polycyclic-by-finite groups (and their group rings) and the theory of polyprocyclic-by-finite compact $p$-adic analytic groups (and their completed group rings). See \cite{DDMS} and \cite{lazard}, or the more recent survey paper by Ardakov and Brown \cite{ardakovbrown}, for an overview of the latter.

It is known that

\centerline{
\xymatrix@1{
\left\{ \txt{uniform groups \\ \cite[Definition 4.1]{DDMS}} \right\} \ar@{}[r]|-*[@]{\subseteq}&
\left\{ \txt{$p$-valuable groups \\ \cite[III, 2.1.2]{lazard} } \right\} \ar@{}[r]|-*[@]{\subseteq}&
\left\{ \txt{compact $p$-adic \\ analytic groups \\ \cite[Definition 8.14]{DDMS} } \right\}, 
}
}

with the first inclusion coming from \cite[Definition 1.15; notes at end of chapter 4]{DDMS}, and the second from \cite[Corollary 8.34]{DDMS}.

Note also that compact $p$-adic analytic groups $G$ are profinite groups satisfying \textbf{Max}: every nonempty set of closed subgroups of $G$ contains a maximal element. Indeed, in the case when $G$ is $p$-valuable, this follows from \cite[III, 3.1.7.5]{lazard}; in the general case, $G$ contains a uniform open normal subgroup $U$ by \cite[8.34]{DDMS}. Indeed, \cite[8.34]{DDMS} implies that compact $p$-adic analytic groups are precisely extensions of uniform (or $p$-valuable) groups by finite groups.

We aim eventually to extend some of the work of Roseblade \cite{roseblade} and Letzter and Lorenz \cite{ll} to the domain of compact $p$-adic analytic groups, building on work by Ardakov \cite{ardakovInv}. Our main results are as follows.

Let $G$ be a compact $p$-adic analytic group, and $H$ a closed subgroup. Following Roseblade \cite{roseblade}, we will say that $H$ is \emph{orbital} (or \emph{$G$-orbital}) if it only has finitely many $G$-conjugates, or equivalently if its normaliser $\mathbf{N}_G(H)$ is open in $G$; and $H$ is \emph{isolated orbital} (or \emph{$G$-isolated orbital}) if $H$ is orbital, and given any other closed orbital subgroup $H'$ of $G$ with $H \lneq H'$, we have $[H':H] = \infty$. $G$ is then said to be \emph{orbitally sound} if all its isolated orbital closed subgroups are in fact normal.

We define the Roseblade subgroup
$$\nio(G) = \bigcap_H \mathbf{N}_G(H),$$
where this intersection is taken over all isolated orbital closed subgroups $H$ of $G$. In section 2, we prove:

\begin{letterthm}[A]
Let $G$ be a compact $p$-adic analytic group. Then $\nio(G)$ is an orbitally sound, open, characteristic subgroup of $G$, and contains all finite-by-nilpotent closed normal subgroups of $G$.\qed
\end{letterthm}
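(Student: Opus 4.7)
The plan is to prove each of the four claims in turn. \emph{Characteristic} is immediate: being isolated orbital is an intrinsic property of a closed subgroup of $G$, so any topological automorphism $\alpha$ of $G$ permutes these subgroups and sends $\mathbf{N}_G(H)$ to $\mathbf{N}_G(\alpha(H))$, hence fixes $\nio(G)$ setwise. For \emph{openness}, I would partition the isolated orbital closed subgroups of $G$ into $G$-conjugacy orbits $\mathcal{O}$ (each finite) and observe that $K_\mathcal{O} := \bigcap_{H \in \mathcal{O}} \mathbf{N}_G(H)$ is the kernel of the $G$-action on $\mathcal{O}$, hence open and normal in $G$. Thus $\nio(G) = \bigcap_\mathcal{O} K_\mathcal{O}$, and it remains to bound the number of relevant orbits $\mathcal{O}$. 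I would do this by fixing a uniform open normal subgroup $U \leq G$ (via \cite[8.34]{DDMS}): any isolated orbital $H \leq G$ intersects $U$ in a closed subgroup whose $\mathbb{Z}_p$-Lie subalgebra of $L(U)$ is stabilised as a set by the finite quotient $G/U$, and the finite rank of $L(U)$ then bounds the number of such subalgebras up to the equivalence ``gives the same normaliser in $G$''.

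For \emph{orbital soundness}, set $K = \nio(G)$ and take any closed isolated orbital subgroup $H'$ of $K$. Since $K$ is open in $G$, $\mathbf{N}_G(H') \supseteq \mathbf{N}_K(H')$ is open, so $H'$ is also $G$-orbital. Using Max on closed subgroups of $G$, pick a maximal closed $G$-orbital subgroup $H$ containing $H'$ with $[H:H'] < \infty$; such $H$ is then automatically isolated orbital in $G$, since any proper orbital overgroup $H''$ with $[H'':H] < \infty$ would yield an orbital $H'' \supsetneq H'$ with $[H'':H'] < \infty$, contradicting maximality. By the definition of $\nio(G)$, $K$ normalises $H$, so $H \cap K$ is normal in $K$, hence $K$-orbital; it contains $H'$ and satisfies $[H \cap K : H'] \leq [H:H'] < \infty$. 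The $K$-isolated-orbital property of $H'$ then forces $H' = H \cap K$, which is normal in $K$.

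For the \emph{finite-by-nilpotent containment}, let $N$ be a closed finite-by-nilpotent normal subgroup of $G$, and $H$ a closed isolated orbital subgroup. Since $N$ is normal in $G$, the commutator $[H,N]$ is $H$-invariant, so $L := H \cdot [H,N]$ is a subgroup---in fact the normal closure of $H$ in $HN$. Its $G$-normaliser contains $\mathbf{N}_G(H)$, so $L$ is $G$-orbital. It suffices to show $[L:H] < \infty$, since then the isolated property of $H$ forces $L = H$, equivalently $[H,N] \leq H$, i.e.\ $N \leq \mathbf{N}_G(H)$. For $N$ finite, $L/H$ embeds in $N$ and so is finite. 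For $N$ nilpotent, I would induct on the nilpotency class of $N/F$ (with $F$ the finite part), peeling off the last nontrivial term of the lower central series---an abelian $G$-normal subgroup central in $N$---and combining the abelian-central base case with the inductive hypothesis modulo this term. The main obstacle is precisely this induction: passing to $G/Z$ with $Z$ central in $N$ does not in general preserve the isolated-orbital property of the image $HZ/Z$, so a more delicate argument---presumably exploiting the isolator and $p$-saturation machinery flagged in the abstract---is needed to control $[L:H]$ in the inductive step.
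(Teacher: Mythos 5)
Your \emph{characteristic} and \emph{orbital soundness} arguments are correct and match the paper's route (the latter unpacks the correspondence of Lemma \ref{lem: 1-1 corresp between isolated orbitals} explicitly; the point that $i_G(H')$ is normalised by $K = \nio(G)$ by definition, so $H' = i_G(H') \cap K$ by $K$-isolatedness, is exactly right). The other two parts have real gaps.

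\emph{Openness.} Your strategy of bounding the number of $G$-conjugacy orbits of isolated orbital subgroups cannot work: already in $G = \mathbb{Z}_p^2$ there are uncountably many isolated orbital (here: isolated) subgroups, all normal, lying in infinitely many distinct $G$-orbits (each a singleton), and no finiteness of $L(U)$ rescues this. The intersection $\bigcap_\mathcal{O} K_\mathcal{O}$ is open there only because each $K_\mathcal{O} = G$, not because the index set is finite. The paper instead proves (Theorem \ref{thm: nio is os}(i)) that any orbitally sound \emph{open normal} subgroup $N$ normalises every isolated orbital $H$: one shows $H \cap N$ is $N$-isolated orbital via Lemma \ref{lem: 1-1 corresp between isolated orbitals}, hence normal in $N$, and $H = \mathrm{i}_G(H\cap N)$ is generated by the finitely many orbital overgroups of $H \cap N$ of finite index, which $N$ permutes. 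Then one produces such an $N$ by taking a uniform open normal subgroup (DDMS 8.34), which is $p$-valuable and hence orbitally sound by Ardakov's Proposition 5.9. This gives $N \leq \nio(G)$, hence openness. Your argument makes no use of orbital soundness of a subgroup, which is the engine here.

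\emph{Finite-by-nilpotent containment.} You correctly handle the finite case and correctly diagnose that your induction breaks: passing to $G/Z$ destroys isolatedness of the image of $H$, so bounding $[L:H]$ this way is a genuine obstruction. The paper avoids the commutator bookkeeping entirely. It proves (Corollary \ref{cor: f-by-n implies o.s.}) that finite-by-nilpotent compact $p$-adic analytic groups are themselves orbitally sound (reduce modulo $\Delta^+$, observe the quotient is torsion-free nilpotent, invoke Lemma \ref{lem: wehrfritz} to see it is $p$-valuable, then Ardakov again), and combines this with Fitting's theorem and the maximality characterisation of $\nio(G)$. The structural fact that ``finite-by-nilpotent $\Rightarrow$ orbitally sound'' is the ingredient your approach is missing; without it, controlling $[H\cdot[H,N]:H]$ directly seems hard, as you note.
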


Now let $G$ be a $p$-valuable group. Ardakov shows, in \cite[Lemma 8.4(a)]{ardakovInv}, that the centre $Z(G)$ is isolated orbital in $G$; we may then deduce from Lemma \ref{lem: properties of orbitally sound groups}(i) that the usual upper central series
$$1 \leq Z(G) \leq Z_2(G) \leq \dots$$
of $G$ as an abstract group consists of \emph{isolated orbital} subgroups of $G$, so that any $p$-valuation on $G$ naturally induces $p$-valuations on each $Z_i(G)$ and $G/Z_i(G)$. Unfortunately, in general, the (abstract) lower central series $\{\gamma_i\}$ (or derived series $\{\mathcal{D}_i\}$) of $G$ will not necessarily consist of \emph{isolated} orbital subgroups, so $G/\gamma_i$ (or $G/\mathcal{D}_i$) will not necessarily remain $p$-valuable.

In section 3, we introduce appropriate ``isolated" analogues of these series for $p$-valuable groups:

\begin{letterthm}[B]
Let $G$ be a $p$-valuable group. Then there exists a unique fastest descending series of isolated orbital closed normal subgroups of $G$, the \emph{isolated lower central series},
$$ G = G_1 \rhd G_2 \rhd \dots , $$
with the properties that each $G_i$ is characteristic in $G$, $G_i/G_{i+1}$ is abelian for each $i$, $[G_i, G_j] \leq G_{i+j}$ for all $i$ and $j$, and there exists some $r$ with $G_r = 1$ if and only if $G$ is nilpotent.

There exists also a unique fastest descending series of isolated orbital closed normal subgroups of $G$, the \emph{isolated derived series},
$$ G = G^{(0)} \rhd G^{(1)} \rhd \dots, $$
with the properties that each $G^{(i)}$ is characteristic in $G$, $G^{(i)}/G^{(i+1)}$ is abelian for each $i$, and there exists some $r$ with $G^{(r)} = 1$ if and only if $G$ is soluble.\qed
\end{letterthm}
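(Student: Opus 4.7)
The natural plan is to set $G_i$ to be the isolator in $G$ of the usual (abstract) lower central term $\gamma_i(G)$, and $G^{(i)}$ to be the isolator of the abstract derived term $\mathcal{D}_i(G)$. Since $G$ is $p$-valuable and hence torsion-free nilpotent as an abstract group (Lazard), the isolator of any closed (normal) subgroup $H$ of $G$ is itself a closed (normal) subgroup---coinciding with the $p$-saturation $\Sat H$---and it is by construction the unique smallest isolated orbital closed subgroup of $G$ containing $H$. The basic structural properties of each $G_i$ (closed, normal, characteristic, isolated orbital, descending) then follow routinely: characteristic-ness because $\gamma_i$ is verbal and isolation commutes with continuous automorphisms; isolated orbital by construction; and $G_{i+1} \leq G_i$ from $\gamma_{i+1} \leq \gamma_i$.

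The substantive content is the commutator and uniqueness statements. The key technical input is that
\[ [\, i_G(A),\ i_G(B)\,] \ \leq\ i_G([A,B]) \]
for closed normal $A, B \trianglelefteq G$. Applied with $A = \gamma_i$ and $B = \gamma_j$, this yields $[G_i, G_j] \leq i_G(\gamma_{i+j}) = G_{i+j}$, and in particular $G_i/G_{i+1}$ is abelian. The nilpotency equivalence is then immediate from torsion-freeness of $p$-valuable groups, since $i_G(1) = 1$: so $G_r = 1$ if and only if $\gamma_r = 1$ if and only if $G$ is nilpotent. For the \emph{fastest descending} uniqueness claim, I would argue by induction: if $\{H_i\}$ is any other series of isolated orbital closed normal subgroups with $H_1 = G$ and $[H_i, H_j] \leq H_{i+j}$, a routine induction gives $\gamma_i \leq H_i$, after which $G_i = i_G(\gamma_i) \leq i_G(H_i) = H_i$, because $H_i$, being isolated orbital, coincides with its own isolator. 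The isolated derived series is handled in exactly the same way, with $\mathcal{D}_i$ replacing $\gamma_i$ and the identity $[i_G(A), i_G(A)] \leq i_G([A,A])$ replacing the bilinear version.

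The main obstacle I foresee is establishing the commutator identity $[i_G(A), i_G(B)] \leq i_G([A,B])$ cleanly in the profinite $p$-valuable setting---i.e., showing that whenever sufficiently high $p$-powers of $a$ and $b$ lie in $A$ and $B$ respectively, some $p$-power of $[a,b]$ lies in $[A,B]$. I would approach this either by Hall--Petresco-style commutator calculus inside the underlying torsion-free nilpotent abstract group, or more cleanly by pushing the problem through Lazard's $\mathbb{Z}_p$-Lie algebra correspondence, where it reduces to a routine fact about isolators of ideals in a nilpotent $\mathbb{Z}_p$-Lie algebra. A secondary care-point, needed to make the whole construction run, is verifying that the isolator in $G$ of a closed normal subgroup really is closed with $\Sat H / H$ finite; this uses the finite rank of $p$-valuable groups, and is the technical bridge between the purely algebraic isolator theory and Lazard's analytic saturation machinery that the paper advertises.
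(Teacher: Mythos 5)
Your plan matches the paper's: define $G_i = \mathrm{i}_G(\overline{\gamma_i})$ and $G^{(i)} = \mathrm{i}_G(\overline{\mathcal{D}_i})$, and reduce everything to the key lemma $[\mathrm{i}_G(A), \mathrm{i}_G(B)] \leq \mathrm{i}_G(\overline{[A,B]})$ for closed normal $A, B \lhd G$. Two corrections to your setup. First, $\mathrm{i}_G(H)$ does \emph{not} coincide with $\Sat H$: the saturation is a subgroup of $\Sat G$, which properly contains $G$ in general, and the correct identification (Lemma \ref{lem: isolators and saturations}) is $\mathrm{i}_G(H) = \Sat H \cap G$. Second, you must take topological closures $\overline{\gamma_i}$, $\overline{\mathcal{D}_i}$ before applying $\mathrm{i}_G$, since the abstract verbal subgroups need not be closed and the isolator is only defined on closed orbital subgroups.

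On the ``main obstacle'' you flag, the paper's Lemma \ref{lem: commutators and isolators} resolves it more directly than either route you propose. Set $L = \mathrm{i}_G(\overline{[A,B]})$ and pass to the (still $p$-valued, since $L$ is isolated) quotient $G/L$, reducing to the case $[A,B] = 1$. Then for $a \in \mathrm{i}_G(A)$, choose $n$ with $a^{p^n} \in A$, so that $[a^{p^n}, b] = 1$ for all $b \in B$, i.e.\ $a^{p^n} = (a^b)^{p^n}$; uniqueness of $p^n$th roots in a $p$-valued group forces $a = a^b$, giving $[\mathrm{i}_G(A), B] = 1$, and a second pass gives $[\mathrm{i}_G(A), \mathrm{i}_G(B)] = 1$. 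Note this argument does not assume $G$ nilpotent, which matters: Theorem B is stated for arbitrary $p$-valuable $G$ (the nilpotency/solubility hypotheses appear only in the ``iff'' clauses), so a Hall--Petresco argument ``inside the torsion-free nilpotent abstract group'' would not cover the derived series in the merely soluble or general case. Your uniqueness argument (induct to get $\gamma_i \leq H_i$, then apply $\mathrm{i}_G$ and use that $H_i$ is its own isolator) is correct and in fact supplies a detail the paper leaves implicit in its one-line appeal to Corollary \ref{cor: N has a nice central series}.
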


Now let $G$ be a fixed compact $p$-adic analytic group. Define the two closed subgroups

\begin{align*}
\Delta^{\hphantom{+}} &= \big\{x\in G \;\big|\; [G: \mathbf{C}_G(x)] < \infty\big\},\\
\Delta^+ &= \big\{x\in \Delta \;\big|\; o(x) < \infty\big\},
\end{align*}

where $o(x)$ denotes the order of $x$. We will show that we always have an inclusion of subgroups
$$1\leq \Delta^+\leq \Delta\leq \nio(G)\leq G.$$

Suppose further that $G$ is a \emph{nilpotent-by-finite} compact $p$-adic analytic group. Consider the set of finite-by-(nilpotent $p$-valuable) open normal subgroups $H$ of $G$ -- that is, the set of open normal subgroups $H$ that contain a finite normal subgroup $F\lhd H$, such that $H/F$ is nilpotent and $p$-valuable. The main result of section 5 is:

\begin{letterthm}[C]
This set contains a unique maximal element $H$, which is characteristic in $G$. There is an inclusion of subgroups
$$\Delta \leq H \leq \nio(G),$$
and the quotient group $\nio(G)/H$ is isomorphic to a subgroup of the group of torsion units of $\mathbb{Z}_p$.\qed
\end{letterthm}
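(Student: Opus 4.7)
The plan is to construct $\fn(G)$ in three stages: existence and uniqueness of the maximal element, the inclusion chain $\Delta \leq \fn(G) \leq \nio(G)$, and the structure of the quotient $\nio(G)/\fn(G)$.

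For existence, since $G$ is nilpotent-by-finite it contains an open normal nilpotent subgroup, and intersecting with an open normal uniform subgroup (which exists by \cite[8.34]{DDMS}) yields an open normal nilpotent $p$-valuable subgroup, so the family is non-empty and admits a maximal element by Max. Uniqueness will follow from closure under products. For any $H$ in the family the finite normal kernel is exactly the torsion subgroup of $H$ (because the nilpotent $p$-valuable quotient is torsion-free), hence characteristic in $H$, hence normal in $G$, and thus contained in $\Delta^+(G)$. So it suffices to work modulo $\Delta^+$: given $H_1, H_2$ in the family, their images $\bar H_1, \bar H_2$ in $\bar G = G/\Delta^+$ are normal nilpotent $p$-valuable, the product $\bar H_1 \bar H_2$ is nilpotent by Fitting, and the remaining task is to check that $\bar H_1 \bar H_2$ is torsion-free, and therefore itself $p$-valuable. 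Uniqueness then gives that $\fn(G)$ is characteristic in $G$.

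For the chain $\Delta \leq \fn(G) \leq \nio(G)$: the right inclusion is immediate from Theorem A, since $\fn(G)$ is a finite-by-nilpotent closed normal subgroup. For the left, $\Delta$ is an FC-group, so by Neumann's theorem $\Delta' \leq \Delta^+$, hence $\Delta/\Delta^+$ is torsion-free abelian, i.e.\ abelian $p$-valuable. Then $\Delta \cdot \fn(G)$ lies in the family by the product argument, and by maximality equals $\fn(G)$.

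For the quotient $\nio(G)/\fn(G)$, I would exhibit a continuous representation $\rho : \nio(G) \to \Aut_{\mathbb{Z}_p}(A)$ on a suitable characteristic abelian subquotient $A$ of $\fn(G)$ -- for example one of maximal rank appearing in the isolated lower central series from Theorem B. Isolated $\mathbb{Z}_p$-submodules of $A$ correspond to isolated orbital closed subgroups of $\nio(G)$, so orbital soundness forces $\rho(\nio(G))$ to stabilise every such submodule, placing its image in the scalars $\mathbb{Z}_p^\times \subseteq \Aut_{\mathbb{Z}_p}(A)$. Identifying $\ker\rho$ with $\fn(G)$ then gives an embedding $\nio(G)/\fn(G) \hookrightarrow \mathbb{Z}_p^\times$; since $\fn(G)$ is open in $\nio(G)$ the image is finite, and finite subgroups of $\mathbb{Z}_p^\times$ lie in the torsion units. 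The main obstacle, I expect, is the torsion-freeness step in the uniqueness argument and the parallel identification $\ker\rho = \fn(G)$ in the last step. Torsion-freeness is not preserved by arbitrary products of normal subgroups, so controlling it inside $\bar G$, and matching it to the ``trivial scalar'' part of $\nio(G)$, will require the $p$-saturation and isolator machinery developed in Section 3.
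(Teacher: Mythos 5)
Your proposal follows essentially the same route as the paper: uniqueness of the maximal element via closure of the family $\mathcal{S}(G)$ under products (modulo $\Delta^+$, using Fitting's theorem plus a torsion-freeness check), then the inclusion chain, then the scalar action on the top graded piece of the isolated lower central series. Where you deviate, the differences are worth noting.

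On the torsion-freeness needed for closure under products, you flag this as the main obstacle and predict that the $p$-saturation/isolator machinery of Section~3 is required. It is not; the paper's argument is elementary. After reducing to $\Delta^+(G) = 1$, the product $\overline{K}\,\overline{L}$ is a finitely generated nilpotent pro-$p$ group, so by Robinson [5.2.7] its torsion elements form a finite characteristic subgroup; being a finite normal subgroup of the open group $\overline{KL}$ it lies in $\Delta^+(\overline{KL}) \leq \Delta^+(\overline{G}) = 1$. That gives torsion-freeness directly, and then Lemma~\ref{lem: wehrfritz} promotes torsion-free to $p$-valuable. No $p$-saturation is needed.

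On the inclusion $\Delta \leq \fn(G)$: you propose to show $\Delta \cdot \fn(G) \in \mathcal{S}(G)$ and invoke maximality. This works, but note that $\Delta$ itself is generally not open, so you cannot literally invoke ``closure under products of elements of $\mathcal{S}(G)$''; you have to re-run Fitting plus the torsion-freeness argument for $\Delta\fn(G)/\Delta^+$. The paper's Lemma~\ref{lem: there exists a large N} instead constructs a nilpotent uniform open normal subgroup $N\Delta$ containing $\Delta$ by showing $[N,\Delta]=1$ directly via the $p$-th-root argument in a $p$-valued group; this is slightly more self-contained and also yields the extra fact $Z(N\Delta)=\Delta$, which the paper uses elsewhere. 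Both approaches are fine.

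On the quotient $\nio(G)/\fn(G)$: your plan coincides with the paper's map $\xi_1$ of Corollary~\ref{cor: defn of xi_1}, acting on $A_1 = H_1/H_2$. You correctly identify ``$\ker\rho = \fn(G)$'' as the real remaining work, and indeed this is the step the paper itself treats most tersely (it only records $H \leq \ker\xi_1$). To close it one shows $M := \ker\xi_1 \cap \nio(G) \in \mathcal{S}(G)$: $M$ is characteristic (scalars are central in $\Aut(A_1)$, so $\xi_1$ is $\Aut(G)$-invariant), $M/\Delta^+$ is torsion-free (a finite-order automorphism that is unipotent on the filtration $(N_i)$ of $N=H/\Delta^+$ must be trivial, so a torsion element of $M/\Delta^+$ would centralise the open subgroup $N$ and hence lie in $\Delta^+(G/\Delta^+)=1$), and $M/\Delta^+$ is nilpotent (its image in $\Aut(N)$ stabilises the filtration, so is nilpotent by the Kaloujnine--Hall stability theorem, and the kernel $\mathbf{C}_{M/\Delta^+}(N)$ lies in $\Delta(M/\Delta^+)$). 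Again this is group-theoretic bookkeeping, not the $p$-saturation machinery you anticipated; Section~3's role in Theorem~C is only to furnish the filtration $(H_i)$ itself.
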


We will denote this $H$ by $\fn(G)$, the \emph{finite-by-(nilpotent $p$-valuable) radical} of $G$.

Putting this together with the technical results of section 4, we can now understand the conjugation action of $\nio(G)$ on $\fn(G)$:

\begin{letterthm}[D]
Let $G$ be a nilpotent-by-finite compact $p$-adic analytic group, and $H = \fn(G)$ its finite-by-(nilpotent $p$-valuable) radical. Write $$N = N_1 \rhd N_2 \rhd \dots \rhd N_r = 1$$ for the isolated lower central series of $N = H/\Delta^+$, and let $H_i$ be the full preimage in $G$ of $N_i$. Then $\nio(G)/H$ is isomorphic to a subgroup of $t(\mathbb{Z}_p^\times)$, the group of torsion units of $\mathbb{Z}_p$, and so is cyclic; let $a$ be a preimage in $\nio(G)$ of a generator of $\nio(G)/H$. Then conjugation by $a$ acts on each $H_i$, and hence induces an action on the (free, finite-rank) $\mathbb{Z}_p$-modules $H_i/H_{i+1} = A_i$. In multiplicative notation, there is some scalar $\zeta\in t(\mathbb{Z}_p^\times)$ such that $x^a = x^{\zeta^i}$ for all $x\in A_i$.\qed
\end{letterthm}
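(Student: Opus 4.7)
The proof will proceed by induction on $i$, with the base case $i=1$ as the main input and the inductive step an equivariance argument for the commutator pairing. First I would verify the setup: $\Delta^+$ and $H = \fn(G)$ are both characteristic in $G$, and each $N_i$ is characteristic in $N = H/\Delta^+$ by Theorem B, so each $H_i$ is characteristic in $G$; conjugation by $a \in \nio(G)$ then stabilises $H_i$ and descends to a $\mathbb{Z}_p$-linear automorphism $\phi_i$ of the free finite-rank $\mathbb{Z}_p$-module $A_i \cong N_i/N_{i+1}$. Let $\zeta \in t(\mathbb{Z}_p^\times)$ denote the image of $aH$ under the embedding $\nio(G)/H \hookrightarrow t(\mathbb{Z}_p^\times)$ supplied by Theorem C.

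For the base case, one must verify that $\phi_1$ is multiplication by $\zeta$. I expect this to follow directly from the construction of the embedding in Theorem C in section 5, where the natural candidate for the map $\nio(G)/H \to t(\mathbb{Z}_p^\times)$ is the action on an abelian quotient of $N$, most plausibly $A_1 = N/N_2$ itself. Should that construction use a different quotient, a direct eigenspace argument works: the generalised eigenspaces of $\phi_1$ on $A_1 \otimes_{\mathbb{Z}_p} \mathbb{Q}_p$ give rise to $\nio(G)$-invariant closed subgroups of $A_1$, and orbital soundness of $\nio(G)$ combined with the cyclicity of $\nio(G)/H$ should force a single eigenvalue.

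For the inductive step, assume $\phi_i$ equals multiplication by $\zeta^i$. The relation $[N_1, N_i] \leq N_{i+1}$ from Theorem B induces a $\mathbb{Z}_p$-bilinear, $a$-equivariant commutator pairing
\[
[-,-] \colon A_1 \times A_i \longrightarrow A_{i+1}.
\]
By the fastest-descending property of the isolated lower central series (equivalently, the associated graded object $\bigoplus A_i$ is generated in degree $1$ after inverting $p$), the image $I$ of this pairing has finite index in $A_{i+1}$. For $x \in A_1$ and $y \in A_i$,
\[
\phi_{i+1}\bigl([x,y]\bigr) \;=\; \bigl[\phi_1(x),\phi_i(y)\bigr] \;=\; [\zeta x,\, \zeta^i y] \;=\; \zeta^{i+1}[x,y],
\]
so $\phi_{i+1}$ acts as $\zeta^{i+1}$ on $I$. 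For an arbitrary $w \in A_{i+1}$, pick $n$ with $p^n w \in I$; then $p^n\bigl(\phi_{i+1}(w) - \zeta^{i+1} w\bigr) = 0$, and torsion-freeness of $A_{i+1}$ forces $\phi_{i+1}(w) = \zeta^{i+1} w$. Rewriting additive scalar multiplication as multiplicative exponentiation yields $x^a = x^{\zeta^i}$ for all $x \in A_i$.

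The main obstacle is the base case: cleanly identifying the scalar action of $a$ on $A_1$ with the element $\zeta$ provided by Theorem C. Once that is settled, the inductive step is essentially formal, following from $a$-equivariance of the commutator bracket together with the torsion-freeness of the quotients $A_i$ in the isolated lower central series.
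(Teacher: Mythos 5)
Your argument is correct and matches the paper's route: the paper proves Theorem D by citing Theorem C (for the scalar on $A_1$) together with Lemma \ref{lem: zetas are not all 1}, whose proof is precisely the equivariant commutator-pairing induction you give. Your worry about the base case is unfounded: the embedding $\nio(G)/H \hookrightarrow t(\mathbb{Z}_p^\times)$ of Theorem C is constructed from the map $\xi_1$ of Corollary \ref{cor: defn of xi_1}, which is by definition the scalar action of $\nio(G)$ on $A_1 = H/H_2$, so the statement that $a$ acts on $A_1$ by $\zeta$ is tautological and no eigenspace argument is needed. The only organizational difference is that the paper first invokes Lemma \ref{lem: F acts by scalars} to know $a$ acts by \emph{some} scalar $\zeta_i$ on each $A_i$ and then uses the pairing only to pin down $\zeta_i = \zeta_1^i$, whereas you extract the scalar action on $A_{i+1}$ directly from equivariance of the pairing, openness of its image, and torsion-freeness of $A_{i+1}$ --- a slightly leaner version of the same argument.
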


Putting these ingredients together allows us to understand the structure of a nilpotent-by-finite compact $p$-adic analytic group $G$, which, for convenience, we display in the following diagram.

\centerline{
\xymatrix{
& G\ar@{-}[d]\\
& \nio(G)\ar@{-}[d]\ar@{-}@/^/[d]^{\lesssim t(\mathbb{Z}_p^\times)}\\
H_1\ar@{=}[r]\ar@{-}[d]\ar@{-}@/_/_{a:\; \zeta \, \actson \, \mathbb{Z}_p^{d_1} \cong A_1}[d] & H\ar@{-}[dddd]\ar@{-}@/^/[dddd]^{\text{ nilpotent, } p\text{-valuable}}\\
H_2\ar@{-}[d] & \\
\vdots\ar@{-}[d] & \\
H_{r-1}\ar@{-}[d]\ar@{-}@/_/_{a:\; \zeta^{r-1} \, \actson \, \mathbb{Z}_p^{d_{r-1}} \cong A_{r-1}}[d] & \\
H_r\ar@{=}[r]& \Delta^+\ar@{-}[d]\\
& 1
}
}

\newpage
\section{Preliminaries}

\begin{defn}
Let $G$ be a profinite group. A closed subgroup $H$ of $G$ is $G$-\emph{orbital} (or just \emph{orbital}, when the group $G$ is clear from context) if $H$ has only finitely many $G$-conjugates, i.e. if $\mathbf{N}_G(H)$ is open in $G$. Similarly, an element $x\in G$ is \emph{orbital} if $[G: \mathbf{C}_G(x)] < \infty$.
\end{defn}

\begin{rk}
Note that, if $G$ is compact $p$-adic analytic, it is profinite (by \cite[8.34]{DDMS}).
\end{rk}

\begin{defn}\label{defn: Delta}
The \emph{FC-centre} $\Delta(G)$ of an arbitrary group $G$ is the subgroup of all orbital elements of $G$. The \emph{finite radical} $\Delta^+(G)$ of $G$ is the subgroup of all torsion orbital elements of $G$.
\end{defn}

\begin{rk}
Throughout this paper, we will write as shorthand $\Delta^+ = \Delta^+(G)$ and $\Delta = \Delta(G)$. Also throughout this paper, all subgroups will be closed, all homomorphisms continuous, etc. unless otherwise specified.
\end{rk}

\begin{lem}\label{lem: Delta and Delta+}
Let $G$ be a compact $p$-adic analytic group. For convenience, we record a few basic properties of $\Delta$ and $\Delta^+$.

\begin{itemize}
\item[(i)] $\Delta^+$ is finite.
\item[(ii)] If $H$ is an open subgroup of $G$, then $\Delta^+(H) \leq \Delta^+(G)$ and $\Delta(H) \leq \Delta(G)$.
\item[(iii)] When $G$ is compact $p$-adic analytic, $\Delta^+$ and $\Delta$ are closed in $G$.
\item[(iv)] $\Delta^+$ and $\Delta$ are characteristic subgroups of $G$.
\item[(v)] $\Delta/\Delta^+$ is a torsion-free abelian group.
\end{itemize}
\end{lem}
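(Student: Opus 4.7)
My plan is to fix throughout a uniform open normal subgroup $U \leq G$, which exists by \cite[8.34]{DDMS}. Since $U$ is torsion-free with a tractable Lie-algebra structure, each of the five claims will reduce either to a statement about $U$ or to the finite quotient $G/U$.

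Parts (i), (ii), and (iv) are essentially bookkeeping. For (i), torsion-freeness of $U$ gives $\Delta^+ \cap U = 1$, so $\Delta^+$ embeds in the finite group $G/U$. For (ii), $\mathbf{C}_H(x) = \mathbf{C}_G(x) \cap H$ has finite index in $H$ if and only if $\mathbf{C}_G(x)$ has finite index in $G$ (using $[G:H] < \infty$), and torsion is trivially preserved between $H$ and $G$. For (iv), any continuous automorphism $\phi$ of $G$ sends $\mathbf{C}_G(x)$ to $\mathbf{C}_G(\phi(x))$ and preserves orders of elements.

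For (iii), $\Delta^+$ is closed because it is finite by (i). For $\Delta$, the key identity is $\Delta \cap U = Z(U)$: the inclusion $\supseteq$ is trivial, and for $\subseteq$, given $x \in \Delta \cap U$ I take an open normal subgroup $N \lhd U$ contained in $\mathbf{C}_U(x)$ of index $p^k$, so every $u \in U$ has $u^{p^k} \in \mathbf{C}_U(x)$. The Lie-algebra correspondence for uniform groups then forces $u \in \mathbf{C}_U(x)$: the centraliser $\mathfrak{c}_{L_U}(\log x)$ is a pure $\mathbb{Z}_p$-submodule of the free $\mathbb{Z}_p$-module $L_U$ (being the kernel of $\mathrm{ad}(\log x)$), so $p^k \log u = \log(u^{p^k}) \in \mathfrak{c}_{L_U}(\log x)$ implies $\log u \in \mathfrak{c}_{L_U}(\log x)$, i.e.\ $u \in \mathbf{C}_U(x)$. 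Hence $\mathbf{C}_U(x) = U$, so $x \in Z(U)$, and $\Delta$ is a finite union of cosets of the closed subgroup $Z(U)$ and is therefore closed. I expect this Lie-theoretic passage from $u^{p^k}$ to $u$, which crucially uses torsion-freeness of $L_U$, to be the main technical step.

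For (v), torsion-freeness of $\Delta/\Delta^+$ is immediate: if $x^n \in \Delta^+$ is torsion then so is $x$. For abelianness, given $x, y \in \Delta$ I take an open normal subgroup $N \lhd G$ with $N \leq \mathbf{C}_G(x) \cap \mathbf{C}_G(y)$, so that $x, y \in \mathbf{C}_G(N)$. The second isomorphism theorem embeds $\mathbf{C}_G(N)/Z(N)$ into the finite group $G/N$, and since $Z(N) \leq Z(\mathbf{C}_G(N))$ the quotient $\mathbf{C}_G(N)/Z(\mathbf{C}_G(N))$ is a further quotient and so is also finite. Schur's theorem then forces $[\mathbf{C}_G(N), \mathbf{C}_G(N)]$ to be finite, so $[x,y]$ is a torsion element of the subgroup $\Delta$, i.e., it lies in $\Delta^+$.
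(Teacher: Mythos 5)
Your proposal is correct, and parts (i), (ii), and (iv) run essentially along the same lines as the paper (for (i) the paper argues via the characterisation of $\Delta^+$ as the join of the finite normal subgroups and derives a contradiction, whereas you intersect directly with $U$ and embed into $G/U$ --- this is arguably cleaner, but both hinge on the torsion-freeness of the uniform open subgroup).

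The genuine divergence is in (iii) and (v). For (iii), the paper works directly with a $p$-valued open subgroup $N$ and invokes the uniqueness of $p^n$-th roots in $p$-valued groups (\cite[III, 2.1.4]{lazard}): from $u^{p^k}\in\mathbf{C}_N(x)$ one gets $(x^{-1}ux)^{p^k}=u^{p^k}$ and hence $x^{-1}ux=u$, so $\Delta(N)=Z(N)$ with no Lie theory needed. You instead pass to the $\mathbb{Z}_p$-Lie algebra $L_U$ and use purity of $\ker\mathrm{ad}(\log x)$. This works, but it tacitly uses the dictionary ``$u$ commutes with $x$ in $U$ if and only if $\log u$ commutes with $\log x$ in $L_U$'', which is true for uniform groups but is not immediate; it amounts to $\mathrm{Ad}(x)=\exp(\mathrm{ad}(\log x))$ together with an invertibility argument for $\mathrm{id}+\tfrac{1}{2}\mathrm{ad}(\log x)+\cdots$, and the paper's route avoids this extra layer entirely. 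You should either cite a reference for that equivalence or replace this step with the $p$-valuation argument, which is strictly less machinery. For (v) the paper simply cites Passman; your argument (take $N\lhd G$ open centralising both $x$ and $y$, embed $\mathbf{C}_G(N)/Z(N)\hookrightarrow G/N$, and apply Schur's theorem to conclude $[x,y]$ is torsion, hence in $\Delta^+$) is exactly the classical proof behind that citation, made explicit, and is correct; it's a worthwhile thing to have spelled out even if the paper chose not to.
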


\begin{proof}
$ $

\begin{itemize}
\item[(i)] $\Delta^+$ is generated by the finite normal subgroups of $G$ \cite[5.1(iii)]{passmanICP}. It is obvious that the compositum of two finite normal subgroups is again finite and normal. Now suppose that $\Delta^+$ is infinite, and take an open uniform subgroup $H$ of $G$ \cite[4.3]{DDMS}: then $\Delta^+\cap H$ is non-trivial, and so we must have some finite normal subgroup $F$ with $F\cap H$ non-trivial. But $F$ is torsion, so this contradicts the fact that $H$ is torsion-free \cite[4.5]{DDMS}.
\item[(ii)] If an element $x\in H$ has finitely many $H$-conjugates, and $H$ has finite index in $G$, then $x$ has finitely many $G$-conjugates.
\item[(iii)] $\Delta^+$ is closed because it is finite.

For the case of $\Delta$, suppose first that $G$ is $p$-valued \cite[III, 2.1.2]{lazard}. Now, any orbital $x\in G$ has $\mathbf{C}_G(x)$ open in $G$, and so, for any $g\in G$, there exists some $n$ with $g^{p^n}\in \mathbf{C}_G(x)$, i.e. $g^{p^n}x = xg^{p^n}$. This implies that $(g^x)^{p^n} = g^{p^n}$, and so by \cite[III, 2.1.4]{lazard}, we get $g^x = g$. Hence $\mathbf{C}_G(x) = G$. In other words, $\Delta = Z(G)$, which is closed in $G$.

When $G$ is not $p$-valued, it still has an open $p$-valued subgroup $N$ \cite[4.3]{DDMS}. Clearly $\Delta(N) = \Delta(G) \cap N$, and so $[\Delta(G) : \Delta(N)] \leq [G:N] < \infty$. So $\Delta(G)$ is a finite union of translates of $Z(N)$, which is closed in $N$ and hence closed in $G$.
\item[(iv)] See \cite[discussion after lemma 4.1.2 and lemma 4.1.6]{passmanASGR}.
\item[(v)] See \cite[lemma 4.1.6]{passmanASGR}.\qedhere
\end{itemize}
\end{proof}

\textbf{Throughout the remainder of this subsection, $G$ is a profinite group unless stated otherwise.}

\begin{defn}\label{defn: orbitally sound}
An orbital closed subgroup $H$ of $G$ is \emph{isolated} if, for all orbital closed subgroups $H'$ of $G$ with $H \lneq H' \leq G$, we have $[H':H] = \infty$. (We will sometimes say that a closed subgroup is \emph{$G$-isolated orbital} as shorthand for \emph{isolated as an orbital closed subgroup of $G$}.) Following Passman \cite[definition 19.1]{passmanICP}, if all isolated orbital closed subgroups of $G$ are in fact normal, we shall say that $G$ is \emph{orbitally sound}.
\end{defn}

We record a few basic properties, before showing that this definition is the same as the one given in \cite[1.3]{roseblade} and \cite[5.8]{ardakovInv} (in Lemma \ref{lem: defns of orbitally sound are equivalent} below).

\begin{lem}\label{lem: properties of orbitally sound groups}
Let $N$ be a closed normal subgroup of $G$.

\begin{itemize}
\item[(i)] Suppose $H$ is a closed subgroup of $G$ containing $N$. Then $H/N$ is $(G/N)$-orbital if and only if $H$ is $G$-orbital; and $H/N$ is $(G/N)$-isolated orbital if and only if $H$ is $G$-isolated orbital.
\item[(ii)] Suppose $G$ is orbitally sound. Then $G/N$ is orbitally sound.
\item[(iii)] Suppose $N$ is finite and $G/N$ is orbitally sound. Then $G$ is orbitally sound.
\end{itemize}
\end{lem}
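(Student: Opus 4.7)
For parts (i) and (ii), the plan is to reduce to the correspondence theorem. Since $N$ is closed normal in $G$ with $N \leq H$, we have $\mathbf{N}_G(H)/N = \mathbf{N}_{G/N}(H/N)$, and the former is open in $G$ if and only if the latter is open in $G/N$; this gives the orbital equivalence. For the isolated orbital equivalence in (i), observe that any closed subgroup $H' \leq G$ with $H \lneq H'$ automatically satisfies $H' \geq N$, so the strict $G$-orbital overgroups of $H$ are in bijection (via the correspondence theorem and the orbital equivalence just established) with strict $(G/N)$-orbital overgroups of $H/N$, with matching indices $[H':H] = [H'/N : H/N]$. Part (ii) is then immediate: an isolated orbital closed subgroup $H/N$ of $G/N$ lifts by (i) to an isolated orbital closed subgroup $H$ of $G$, which is normal by hypothesis, so $H/N$ is normal in $G/N$.

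Part (iii) is the substantive work. Given an isolated orbital closed subgroup $H$ of $G$, I aim to show $H \trianglelefteq G$ by passing through $HN$ and invoking the orbital soundness of $G/N$. First, $HN$ is $G$-orbital: $\mathbf{N}_G(H)$ is open and normalises both $H$ and the normal subgroup $N$, hence also $HN$. Next I claim $HN/N$ is \emph{isolated} orbital in $G/N$. By (i) it is orbital, and any strict $(G/N)$-orbital overgroup $K/N$ of $HN/N$ with $[K/N : HN/N] < \infty$ pulls back via (i) to a strict $G$-orbital overgroup $K$ of $H$ with $[K:H] = [K:HN]\cdot[HN:H]$; both factors are finite, since $[HN:H] = [N : H \cap N] \leq |N| < \infty$, contradicting the isolation of $H$. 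Orbital soundness of $G/N$ therefore forces $HN \trianglelefteq G$.

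To conclude, consider the normal closure $H^G$. Because $HN$ is normal in $G$, every conjugate of $H$ lies in $HN$, so $H^G \leq HN$ and hence $[H^G : H] \leq [HN : H] \leq |N| < \infty$. Since $H^G$ is closed and normal in $G$, it is $G$-orbital, and the isolation of $H$ forces $H = H^G$, that is, $H \trianglelefteq G$ as required. The main obstacle I expect is precisely this two-step argument in (iii): one must first establish $HN \trianglelefteq G$ (through the detour of verifying that $HN/N$ inherits isolation, which crucially uses the finiteness of $N$) before the normal-closure argument can produce the finite-index, $G$-orbital overgroup of $H$ inside $HN$ that triggers the isolation hypothesis.
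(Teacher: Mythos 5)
Your parts (i) and (ii) are exactly the paper's argument: the correspondence $\mathbf{N}_{G/N}(H/N) = \mathbf{N}_G(H)/N$, preservation of indices, and then (ii) reads off from (i).

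Part (iii) is correct but takes a noticeably longer route than the paper. The paper's argument (once unpacked) is: $HN/N$ is $(G/N)$-orbital, so by (i) $HN$ is $G$-orbital; since $N$ is finite, $[HN:H] < \infty$; since $H$ is isolated, this forces $H = HN$ outright, so $N \leq H$. Then (i) makes $H/N$ a $(G/N)$-isolated orbital subgroup, orbital soundness of $G/N$ makes it normal, and $H \lhd G$ follows. You instead work to show that $HN/N$ is isolated orbital in $G/N$ (a nontrivial detour), deduce $HN \lhd G$, and then run a normal-closure argument inside $HN$ to pin down $H$. Everything you write is valid — including the closedness of $H^G$, which holds here because $H^G$ sits between $H$ and $HN$ and so has finite index over the closed subgroup $H$ — but you have made the argument do more than it needs to. The moment you have ``$HN$ is $G$-orbital with $[HN:H]<\infty$'', the isolation hypothesis on $H$ already yields $H = HN$; there is no need to establish that $HN/N$ is isolated, nor to pass through $H^G$. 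You identified ``verifying that $HN/N$ inherits isolation'' as the main obstacle, but in fact it is a step the paper avoids entirely by collapsing $HN$ down to $H$ first.
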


\begin{proof}
$ $

\begin{itemize}
\item[(i)] It is easily checked that $\mathbf{N}_{G/N}(H/N) = \mathbf{N}_G(H) / N$, and so
\begin{equation*}
[G: \mathbf{N}_G(H)] = [G/N: \mathbf{N}_G(H)/N] = [G/N: \mathbf{N}_{G/N}(H/N)].
\end{equation*}
So $H$ is orbital if and only if $H/N$ is orbital. Suppose these two groups are both orbital, and let $H'$ be an orbital closed subgroup of $G$ with $H \lneq H' \leq G$: then $[H':H] = [H'/N : H/N]$, so $H$ is isolated if and only if $H'$ is isolated.
\item[(ii)] Let $H/N$ be an isolated orbital closed subgroup of $G/N$. Then, by (i), $H$ is an isolated orbital subgroup of $G$, so $H\lhd G$, and so $H/N \lhd G/N$.
\item[(iii)] Let $H$ be an isolated orbital closed subgroup of $G$, and $H'$ an orbital closed subgroup of $G$ with $H \lneq H' \leq G$. If $H$ contains $N$, then we may apply (i) to show that $[H':H] = [H'/N : H/N] = \infty$ as $G/N$ is orbitally sound. But $H$ must contain $N$: indeed, as $H$ is $G$-orbital, clearly $HN/N$ is $G/N$-orbital, and so, by (i), $HN$ is $G$-orbital. But $N$ is finite, so $[HN:H] < \infty$, and $H$ is isolated, so $H = HN$.\qedhere
\end{itemize}
\end{proof}

\textbf{From now on, we assume that $G$ is a profinite group satisfying the \emph{maximum condition} on closed subgroups}: every nonempty set of closed subgroups of $G$ has a maximal element.

\begin{rk}
Note that, if $G$ is compact $p$-adic analytic, it satisfies the maximum condition on closed subgroups. Indeed, this is true for $p$-valuable $G$ by \cite[III, 3.1.7.5]{lazard}, and hence true for any compact $p$-adic analytic group $G$, as $G$ contains a uniform (hence $p$-valuable) subgroup of finite index \cite[8.34]{DDMS}.
\end{rk}

\begin{defn}\label{defn: isolator}
If $H$ is an orbital closed subgroup of $G$, we define its \emph{isolator} $\mathrm{i}_G(H)$ in $G$ to be the closed subgroup of $G$ generated by all orbital closed subgroups $L$ of $G$ containing $H$ as an open subgroup, i.e. with $[L:H] < \infty$.

Once we have proved that $\mathrm{i}_G(H)$ is indeed an isolated orbital closed subgroup of $G$ containing $H$ as an open subgroup, it will be clear from the definition that it is the unique such closed subgroup.
\end{defn}

We now prove some basic properties of $\mathrm{i}_G(H)$, following \cite{passmanICP}.

\begin{propn}\label{propn: orbital subgroups are open in their isolators}
Suppose $H$ is an orbital closed subgroup of $G$. Then $H$ is open in $\mathrm{i}_G(H)$.
\end{propn}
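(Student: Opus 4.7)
The natural strategy is to apply the maximum condition on closed subgroups of $G$ to the nonempty collection
\[
\mathcal{S} \;=\; \{L \leq G : L \text{ is closed and orbital},\; H \leq L,\; [L:H] < \infty\},
\]
obtaining a maximal element $L_0 \in \mathcal{S}$. Since $\mathrm{i}_G(H)$ is by definition the closed subgroup generated by the members of $\mathcal{S}$, it will suffice to show that $L \leq L_0$ for every $L \in \mathcal{S}$: then $L_0 = \mathrm{i}_G(H)$ and $H$ is open in $L_0$ by membership in $\mathcal{S}$.

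Fix such an $L$ and consider $K := \overline{\langle L_0, L\rangle}$. This $K$ is orbital, because $\mathbf{N}_G(K) \supseteq \mathbf{N}_G(L_0) \cap \mathbf{N}_G(L)$ is an intersection of two open subgroups of $G$ and hence open. Therefore, if I can show $[K:H] < \infty$, then $K \in \mathcal{S}$; the maximality of $L_0$ (together with $L_0 \leq K$) will then force $K = L_0$, giving $L \leq L_0$. The whole argument reduces to bounding $[K:H]$.

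For this, I would let $N$ be the normal core in $G$ of the open subgroup $\mathbf{N}_G(L_0) \cap \mathbf{N}_G(L)$, an open normal subgroup of $G$ normalising both $L_0$ and $L$. Inside $N$, the closed subgroups $L_0 \cap N$ and $L \cap N$ are normal in $N$, have finite index in $L_0, L$ respectively, and both contain $H \cap N$ as an open subgroup. Since $K / (K \cap N)$ embeds into the finite group $G/N$, the subgroup $K \cap N$ is open in $K$, and the problem reduces further to showing $[K \cap N : H \cap N] < \infty$.

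The main obstacle is to describe $K \cap N$ concretely. By orbitality of $L_0, L$ together with the normality of $N$ in $G$, the set of $K$-conjugates of $L_0 \cap N$ inside $N$ is finite, and each such conjugate is normal in $N$ (and likewise for $L \cap N$). The closed subgroup $P$ of $N$ generated by all of these conjugates is thus a finite product of normal-in-$N$ subgroups, each of finite index above $H \cap N$, so that $[P : H \cap N] < \infty$. The delicate remaining task is to show $K \cap N = P$: a priori an element of $K \cap N$ is a word in $L_0 \cup L$ that happens to lie in $N$, and rearranging it into an element of $P$ (at the price of commutators $[x,y]$ with $x\in L_0, y\in L$) requires both the $K$-invariance of $P$ and the maximum condition on closed subgroups of $N$ to ensure the process terminates. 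Once this equality is established, combining it with $[H : H \cap N] < \infty$ yields $[K:H] < \infty$, as required.
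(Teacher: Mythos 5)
Your high-level strategy matches the paper's: apply the maximum condition to the family $\mathcal{S}$, obtain a maximal $L_0$, and then show $L_0$ already absorbs every $L \in \mathcal{S}$ by proving that $\overline{\langle L_0, L\rangle}$ still lies in $\mathcal{S}$, which reduces (as you correctly observe, since $K$ is visibly orbital) to showing $[\,\overline{\langle L_0,L\rangle} : H\,]<\infty$. This finiteness is exactly the key lemma that the paper proves first, and it is here that your argument has genuine gaps.

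First, the assertion that each $K$-conjugate of $L_0 \cap N$ (or $L\cap N$) has ``finite index above $H\cap N$'' is not justified and is generally false: for $k\in K$ we have $(L_0\cap N)^k \supseteq (H\cap N)^k$ of finite index, but $(H\cap N)^k$ need not contain --- or even be comparable to --- $H\cap N$ unless $k$ normalises $H$. Without this containment, the inductive index computation that gives $[P:H\cap N]<\infty$ from the normality-in-$N$ of the factors does not get started. Second, and more seriously, the identity $K\cap N = P$ is precisely the ``delicate remaining task'' you flag, and the paragraph offered in its place is not a proof: a word in $L_0\cup L$ landing in $N$ cannot in general be rewritten as a product of conjugates of $L_0\cap N$ and $L\cap N$, and orbitality of $L_0$ and $L$ must enter in an essential way (the infinite dihedral group, generated by two order-$2$ subgroups meeting the translation subgroup trivially, shows the purely group-theoretic version of the rewriting claim fails; it is only excluded here because its order-$2$ subgroups are not orbital, a fact your sketch never uses). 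In effect, establishing $K\cap N = P$ together with $[P:H\cap N]<\infty$ is at least as hard as the original finiteness claim, so the reduction is circular.

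The paper closes this gap by a different device. After passing to $G/H^\circ$ (so $H$ has trivial normal core) and assuming $G=\langle L_1,L_2\rangle$, it chooses an open normal $N$ for which $[N,L_i]\subseteq H$ (taking $N_i$ to be the kernel of the permutation action of $\mathbf{N}_G(L_i)$ on the finitely many cosets of $H$ in $L_i$). Then $N\cap H$ is normalised by both $L_i$, hence by $G$, hence is trivial; since $N$ is open this forces $H$, and then each $L_i$, to be finite, so $G=\Delta^+$ is finite. You would need to replace the $K\cap N = P$ step with an argument of this kind --- one that actually exploits the orbitality of $L_0$ and $L$ to force the relevant subgroup down to a normal one.
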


\begin{proof}
We first show that, if $L_1$ and $L_2$ are orbital subgroups of $G$ containing $H$ as an open subgroup, then $[\langle L_1, L_2\rangle : H] < \infty$. Write $(-)^\circ$ for $\bigcap_{g\in G}(-)^g$. Suppose without loss of generality that $G = \langle L_1, L_2\rangle$, and that $H^\circ = 1$ (by passing to $G/H^\circ$).

For $i = 1, 2$, as $[L_i : H] < \infty$ and as $H, L_i$ are all orbital, we may take an open normal subgroup $N$ of $G$ such that $[N, L_i] \subseteq H$. Indeed, $\mathbf{N}_G(L_i)$ is a subgroup of finite index in $G$, and permutes the (finitely many) left cosets of $H$ in $L_i$ by left multiplication; take $N_i$ to be the kernel of this action, and set $N = N_1 \cap N_2$.

Hence $[N\cap H, L_i] \subseteq N\cap H$, i.e. both $L_1$ and $L_2$ normalise $N\cap H$, so $G$ normalises $N\cap H$. So $N\cap H$ is a normal subgroup of $G$ contained in $H$, and by assumption must be trivial. But $N$ was an open subgroup of $G$, so $H$ must have been finite, and so $L_1$ and $L_2$ must be \emph{finite} orbital subgroups of $G$. This implies that $L_i \leq \Delta^+$, and hence $G = \Delta^+$, so that $G$ is finite, as required.

Now, in the general case, $G$ satisfies the maximal condition on closed subgroups, so we can choose $L$ maximal subject to $L$ being orbital and $[L:H] < \infty$. This $L$ is $\mathrm{i}_G(H)$ and contains $H$ as an open subgroup.
\end{proof}

\begin{lem}\label{lem: properties of isolator}
$ $

\begin{itemize}
\item[(i)] Suppose $H$ is an orbital closed subgroup of $G$. Then $\mathrm{i}_G(H)$ is an isolated orbital closed subgroup of $G$. Furthermore, if $H$ is normal in $G$, then so is $\mathrm{i}_G(H)$.
\item[(ii)] Suppose $G$ is orbitally sound and $H$ is a closed subgroup of finite index. Then $H$ is orbitally sound.
\end{itemize}
\end{lem}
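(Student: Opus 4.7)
For part (i), the plan is to leverage Proposition \ref{propn: orbital subgroups are open in their isolators} directly. Since $H$ is open in $\mathrm{i}_G(H)$, the normaliser $\mathbf{N}_G(\mathrm{i}_G(H))$ contains the finite-index subgroup of $\mathbf{N}_G(H)$ that acts trivially on the (finite) set of cosets of $H$ in $\mathrm{i}_G(H)$, so $\mathrm{i}_G(H)$ is $G$-orbital. To get isolation I would argue by contradiction: if there were an orbital closed $L$ with $\mathrm{i}_G(H) \lneq L \leq G$ and $[L : \mathrm{i}_G(H)] < \infty$, then multiplicativity of indices gives $[L:H] < \infty$, so $L$ is one of the orbital subgroups whose join defines $\mathrm{i}_G(H)$ (equivalently, contradicting the maximal choice made at the end of the proof of Proposition \ref{propn: orbital subgroups are open in their isolators}). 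For the normality assertion, if $H \lhd G$ then for any $g \in G$ and any orbital closed $L$ with $H \leq L$ and $[L:H]<\infty$, the conjugate $L^g$ is again orbital, contains $H^g = H$ as an open subgroup, and so lies in $\mathrm{i}_G(H)$; hence the generating family is $G$-stable and $\mathrm{i}_G(H)^g = \mathrm{i}_G(H)$.

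For part (ii), the idea is to pull an $H$-isolated orbital subgroup up to $G$ via the isolator, invoke orbital soundness of $G$, and push back down through intersection with $H$. Let $K$ be an $H$-isolated orbital closed subgroup of $H$. Since $[G:H]<\infty$, the normaliser $\mathbf{N}_G(K)$ contains $\mathbf{N}_H(K)$, which is open in $H$ and hence of finite index in $G$; thus $K$ is $G$-orbital, so $\mathrm{i}_G(K)$ makes sense. By part (i), $\mathrm{i}_G(K)$ is $G$-isolated orbital, and because $G$ is orbitally sound we get $\mathrm{i}_G(K) \lhd G$.

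The decisive step is then to consider $M := \mathrm{i}_G(K) \cap H$. Being the intersection of a normal subgroup of $G$ with $H$, it is normalised by $H$ (in particular $H$-orbital). It contains $K$, and
\[
[M:K] \leq [\mathrm{i}_G(K):K] < \infty,
\]
so the $H$-isolation of $K$ forces $M = K$. Hence $K = \mathrm{i}_G(K) \cap H \lhd H$, as required. The only place where anything can go wrong is the initial step in (ii) verifying that $K$ is $G$-orbital so that $\mathrm{i}_G(K)$ is defined, but this is the routine finite-index argument above; the rest is bookkeeping with the defining property of the isolator.
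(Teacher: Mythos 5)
Your argument is correct in substance and mirrors the paper's proof; part (ii), and the isolation and normality claims of part (i), are essentially identical to what the paper does. The one point worth tightening is the orbitality step in (i): speaking of the subgroup of $\mathbf{N}_G(H)$ that ``acts trivially on the cosets of $H$ in $\mathrm{i}_G(H)$'' presupposes that $\mathbf{N}_G(H)$ acts on that coset set at all, i.e.\ that $\mathbf{N}_G(H)$ normalises $\mathrm{i}_G(H)$. This does hold --- conjugation by any $g\in\mathbf{N}_G(H)$ permutes the orbital closed subgroups $L$ containing $H$ as an open subgroup (the generating family of Definition~\ref{defn: isolator}) and hence fixes $\mathrm{i}_G(H)$ --- but once this is observed, all of the open subgroup $\mathbf{N}_G(H)$ already lies in $\mathbf{N}_G(\mathrm{i}_G(H))$, so passing to the finite-index coset-kernel is superfluous. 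The paper instead writes $\mathrm{i}_G(H)$ as generated by finitely many orbital $L_1,\dots,L_n$ with $[L_i:H]<\infty$ and intersects their open normalisers $\mathbf{N}_G(L_i)$; either device produces an open subgroup inside $\mathbf{N}_G(\mathrm{i}_G(H))$ and hence establishes orbitality.
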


\begin{proof}
$ $

\begin{itemize}
\item[(i)] If $\mathrm{i}_G(H)$ is orbital, then by Proposition \ref{propn: orbital subgroups are open in their isolators}, it is isolated (by construction). But $H$ has finite index in $\mathrm{i}_G(H)$, so $\mathrm{i}_G(H)$ must be generated by a finite number of subgroups $L_1, \dots, L_n$ containing $H$ as a subgroup of finite index. So
$$\bigcap_{i=1}^n \mathbf{N}_G(L_i) \leq \mathbf{N}_G(\mathrm{i}_G(H)),$$
and as each $\mathbf{N}_G(L_i)$ is open in $G$, so is $\mathbf{N}_G(\mathrm{i}_G(H))$.

Now suppose that $H$ is normal in $G$. To see that $\mathrm{i}_G(H)$ is normal in $G$, fix $g\in G$, and note that conjugation by $g$ fixes $H$ and therefore simply permutes the set of orbital closed subgroups $L$ of $G$ containing $H$ as an open subgroup, i.e. permutes the set of subgroups of $G$ that generate $\mathrm{i}_G(H)$ (see Definition \ref{defn: isolator}).
\item[(ii)] Let $K$ be an isolated $H$-orbital closed subgroup of $H$. Then $K$ is $G$-orbital, so $\mathrm{i}_G(K)$ is an isolated orbital subgroup of $G$, and so is normal in $G$. Hence $\mathrm{i}_G(K) \cap H$ is normal in $H$. But $[\mathrm{i}_G(K) : K] < \infty$, so $[\mathrm{i}_G(K)\cap H : K] < \infty$, and hence $\mathrm{i}_G(K)\cap H = K$, as $K$ was assumed to be isolated in $H$.\qedhere
\end{itemize}
\end{proof}

\begin{lem}\label{lem: 1-1 corresp between isolated orbitals}
Let $H$ be an open normal subgroup of $G$. Then there is a one-to-one correspondence

\centerline{
\xymatrix@R-2pc{
\left\{ \txt{isolated orbital \\ closed subgroups of $G$} \right\} \ar@{<->}[r]&
\left\{ \txt{isolated orbital \\ closed subgroups of $H$} \right\}\\
G'\ar@{|->}[r]& \mathrm{i}_H(G'\cap H),\\
\mathrm{i}_G(H')\ar@{<-|}[r]& H'.
}
}
\end{lem}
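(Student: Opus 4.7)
The plan is to verify that the two maps are well-defined on isolated orbital subgroups, and then to exploit the uniqueness property of the isolator (built into Definition \ref{defn: isolator} and Lemma \ref{lem: properties of isolator}(i)) to show they are mutually inverse. Everything rests on one easy observation: since $[G:H]<\infty$, for any closed subgroup $K\leq G$ we have $[K:K\cap H]<\infty$, and for any closed $L\leq H$ the subgroup $L$ is $H$-orbital if and only if it is $G$-orbital (using $\mathbf{N}_H(L)\leq \mathbf{N}_G(L)$ together with the fact that $H$ is open in $G$).

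For well-definedness, if $G'$ is $G$-isolated orbital then $G'\cap H$ has $G$-normaliser containing the open subgroup $\mathbf{N}_G(G')\cap H$, so $G'\cap H$ is $G$-orbital and hence $H$-orbital; then $\mathrm{i}_H(G'\cap H)$ is $H$-isolated orbital by Lemma \ref{lem: properties of isolator}(i). Conversely, if $H'$ is $H$-isolated orbital then $H'$ is automatically $G$-orbital by the observation, and $\mathrm{i}_G(H')$ is $G$-isolated orbital.

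Bijectivity reduces to two sharper claims: (i) if $G'$ is $G$-isolated orbital then $\mathrm{i}_H(G'\cap H)=G'\cap H$; and (ii) if $H'$ is $H$-isolated orbital then $\mathrm{i}_G(H')\cap H=H'$. For (i), any $H$-orbital closed $L\subseteq H$ containing $G'\cap H$ with finite index is $G$-orbital, so $L\subseteq \mathrm{i}_G(G'\cap H)$ by the generating description in Definition \ref{defn: isolator}; but $G'\cap H$ is $G$-orbital of finite index in the $G$-isolated orbital $G'$, so $\mathrm{i}_G(G'\cap H)=G'$, forcing $L\subseteq G'\cap H$. For (ii), $\mathrm{i}_G(H')\cap H$ is an $H$-orbital overgroup of $H'$ with $[\mathrm{i}_G(H')\cap H:H']\leq [\mathrm{i}_G(H'):H']<\infty$, so the isolatedness of $H'$ in $H$ forces equality. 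Combining, $\mathrm{i}_G(\mathrm{i}_H(G'\cap H))=\mathrm{i}_G(G'\cap H)=G'$ and $\mathrm{i}_H(\mathrm{i}_G(H')\cap H)=\mathrm{i}_H(H')=H'$, so the two maps are inverse bijections. I do not anticipate a serious obstacle; the only point needing care is the (routine) back-and-forth transfer of the orbital property between $G$ and $H$, which is made trivial by the openness of $H$.
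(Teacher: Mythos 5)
Your proof is correct and follows essentially the same strategy as the paper's: establish well-definedness of both maps using the transfer of the orbital property between $G$ and $H$ (which is routine because $H$ is open), and then use the uniqueness property of the isolator from Definition~\ref{defn: isolator} to verify the two compositions are identities. Your claim~(ii) is word-for-word the paper's argument; your claim~(i), showing $\mathrm{i}_H(G'\cap H)=G'\cap H$ directly, is a slightly sharper packaging of what the paper does via the shorthand $\mathrm{i}_G\mathrm{i}_H=\mathrm{i}_G$ followed by $\mathrm{i}_G(G'\cap H)=G'$ — both ultimately reduce to the same uniqueness observation — and has the pleasant side effect of identifying the forward map as simply $G'\mapsto G'\cap H$.
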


\begin{proof}
Suppose first that $H'$ is an arbitrary orbital closed subgroup of $H$. That is, $\mathbf{N}_H(H')$ is open in $H$, hence also in $G$, and so $\mathbf{N}_G(H')$ must be open in $G$. Therefore $H'$ is also $G$-orbital, and so, by Lemma \ref{lem: properties of isolator}(i), $\mathrm{i}_G(H')$ is an isolated orbital closed subgroup of $G$.

Conversely, take a $G$-isolated orbital closed subgroup $G'$: then $\mathbf{N}_G(G')$ is open in $G$, as $G'$ is $G$-orbital, which implies that $\mathbf{N}_H(G'\cap H)$ is open in $H$, i.e. that $G'\cap H$ is $H$-orbital. Now $\mathrm{i}_H(G'\cap H)$ is $H$-isolated orbital by Lemma \ref{lem: properties of isolator}(i).

Now we will show that these correspondences are mutually inverse.

In one direction, we must take $H'$ as above, assume further that it is $H$-isolated orbital, and show that $\mathrm{i}_H(\mathrm{i}_G(H')\cap H) = H'$. To do this, note that $\mathrm{i}_G(H')$ contains $H'$ as an open subgroup by Proposition \ref{propn: orbital subgroups are open in their isolators}, so $\mathrm{i}_G(H')\cap H$ is an $H$-orbital closed subgroup (by the correspondence above) containing the $H$-isolated orbital $H'$ as an open subgroup, and so by definition the two must be equal.

For the converse direction, we must take $G'$ as above and show that $\mathrm{i}_G(\mathrm{i}_H(G'\cap H))$ is equal to $G'$. But clearly $\mathrm{i}_G \mathrm{i}_H = \mathrm{i}_G$, and both $G'$ and $\mathrm{i}_G(G'\cap H)$ are $G$-isolated orbital ($G'$ by assumption, $\mathrm{i}_G(G'\cap H)$ by definition) and contain $G'\cap H$ as an open subgroup, so by uniqueness (see Definition \ref{defn: isolator}), they are equal.
\end{proof}

\begin{lem}\label{lem: defns of orbitally sound are equivalent}
The following are equivalent:

\begin{itemize}
\item[(i)]
Any isolated orbital closed subgroup $H$ of $G$ is normal.
\item[(ii)]
Any orbital closed subgroup $K$ of $G$ contains a subgroup $N$ of finite index in $K$ which is normal in $G$.
\end{itemize}
\end{lem}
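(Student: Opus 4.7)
The plan is to use the isolator as a bridge in both directions: each implication reduces to a short observation once we invoke the material on $\mathrm{i}_G$ already developed above.

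For (ii)$\Rightarrow$(i), I would take an isolated orbital closed subgroup $H$ of $G$ and apply (ii) to obtain $N\leq H$ with $N\lhd G$ and $[H:N]<\infty$. Since $N$ is $G$-normal it is (trivially) $G$-orbital, so $\mathrm{i}_G(N)$ is defined. Because $H$ itself is orbital and contains $N$ as an open subgroup, the definition of isolator (Definition \ref{defn: isolator}) places $H\leq\mathrm{i}_G(N)$. But $\mathrm{i}_G(N)$ contains $N$ as an open subgroup by Proposition \ref{propn: orbital subgroups are open in their isolators}, so $[\mathrm{i}_G(N):H]\leq[\mathrm{i}_G(N):N]<\infty$; since $\mathrm{i}_G(N)$ is $G$-orbital and $H$ is $G$-isolated orbital, this forces $H=\mathrm{i}_G(N)$. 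Lemma \ref{lem: properties of isolator}(i) then tells us that $\mathrm{i}_G(N)$ is $G$-normal, giving (i).

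For (i)$\Rightarrow$(ii), I would start with any orbital closed subgroup $K$ and set $L:=\mathrm{i}_G(K)$. By Lemma \ref{lem: properties of isolator}(i), $L$ is isolated orbital, and (i) upgrades it to $L\lhd G$. The key observation is that, because $L$ is $G$-normal and contains $K$, every $G$-conjugate $K^g$ satisfies $K^g\leq L^g=L$. Since $K$ has only finitely many conjugates $K=K_1,\dots,K_m$ (being orbital), and conjugation by $g$ is an index-preserving bijection so that each $K_i$ has finite index $[L:K]$ in the normal subgroup $L$, the intersection $N:=K_1\cap\cdots\cap K_m$ has finite index in $K$. By construction $N$ is the core of $K$ in $G$, hence $G$-normal, which is (ii).

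I do not expect a serious obstacle here; the whole proof rests on material already proved in this subsection. The most delicate point is the observation in (i)$\Rightarrow$(ii) that once $\mathrm{i}_G(K)$ has been promoted to a $G$-normal subgroup, all $G$-conjugates of $K$ automatically land inside it. This is what lets a Poincar\'e-style core construction bound $[K:N]$, rather than merely $[L:N]$.
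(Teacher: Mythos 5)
Your proof is correct and follows essentially the same route as the paper: in both directions you pass through $\mathrm{i}_G(\cdot)$, identifying the isolated orbital $H$ with the isolator of a $G$-normal subgroup (for (ii)$\Rightarrow$(i)) and taking the core of $K$ inside the normalised isolator $\mathrm{i}_G(K)$ (for (i)$\Rightarrow$(ii)). The only cosmetic difference is that the paper takes $H^\circ$ to be the largest $G$-normal subgroup of $H$ rather than an arbitrary $N$ supplied by (ii), but the argument is identical.
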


\begin{proof}
$ $

$\boxed{\text{(i)}\Rightarrow\text{(ii)}}$ Let $K$ be an orbital closed subgroup of $G$. By Lemma \ref{lem: properties of isolator}(i), $\mathrm{i}_G(K)$ is an isolated orbital closed subgroup of $G$, and so (by assumption) is normal in $G$. Therefore, as it contains $K$ as a subgroup of finite index (by Proposition \ref{propn: orbital subgroups are open in their isolators}), it must contain each conjugate $K^g$ (for any $g\in G$) as a subgroup of finite index. But as $K$ is $G$-orbital, it only has finitely many $G$-conjugates, and so their intersection $K^\circ$ still has finite index in $\mathrm{i}_G(K)$ and hence also in $K$, and $K^\circ$ is normal in $G$ by construction.

$\boxed{\text{(ii)}\Rightarrow\text{(i)}}$ Let $H$ be an isolated orbital closed subgroup of $G$, and write $H^\circ$ for the largest normal subgroup of $G$ contained in $H$, which by (ii) must have finite index in $H$. Now clearly $H \leq \mathrm{i}_G(H^\circ)$ by definition of $\mathrm{i}_G(H^\circ)$, but also $\mathrm{i}_G(H^\circ) \leq H$ as $H$ is isolated and contains $H^\circ$. So $H$ is the $G$-isolator of a normal subgroup, and so by Lemma \ref{lem: properties of isolator}(i), $H$ is also normal in $G$.
\end{proof}

\newpage
\section{The Roseblade subgroup $\nio(G)$}

%
%
%
%

We begin this section by remarking that ``orbitally sound" is not too restrictive a condition. Recall:

\begin{lem}\label{lem: p-valued implies o.s.}
Let $G$ be a $p$-valuable group. Then $G$ is orbitally sound.\qed
\end{lem}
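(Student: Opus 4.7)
The plan is to apply the equivalent characterisation of orbital soundness from Lemma \ref{lem: defns of orbitally sound are equivalent}: it suffices to show that every isolated orbital closed subgroup $H \leq G$ is normal in $G$. Since $G$ is $p$-valuable, hence a pro-$p$ group, the finite quotient $G/\mathbf{N}_G(H)$ has order $p^n$ for some $n \geq 0$, so $H$ has exactly $p^n$ conjugates $H = H_0, H_1, \ldots, H_{p^n-1}$ in $G$.

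Form the core $H^\circ := \bigcap_{g\in G} H^g$, a closed normal subgroup of $G$ contained in $H$ and equal to the intersection of these $p^n$ conjugates. Once one knows $[H:H^\circ]<\infty$, Lemma \ref{lem: properties of isolator}(i) produces the isolator $\mathrm{i}_G(H^\circ)$ as an isolated orbital closed normal subgroup of $G$; since $H$ is itself isolated orbital and contains $H^\circ$ with finite index, the uniqueness built into Definition \ref{defn: isolator} then forces $\mathrm{i}_G(H^\circ)=H$, whence $H$ is normal in $G$. This reduces the lemma to showing that $H$ is commensurable with each of its conjugates, i.e.\ $[H:H\cap H^g]<\infty$ for all $g\in G$.

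The commensurability step is the main obstacle, and is essentially the only place the $p$-valuable hypothesis is used substantively. The cleanest route is via Lazard's correspondence: pass to the $\mathbb{Q}_p$-Lie algebra $L(G)$ associated to the $p$-saturation of $G$, where the closed subgroup $H$ corresponds to a Lie subalgebra $\mathfrak{h}\leq L(G)$, and being $G$-orbital corresponds to $\mathfrak{h}$ being fixed (as a subspace) by the adjoint action of the open subgroup $\mathbf{N}_G(H)$. But since $\mathbf{N}_G(H)$ is open in $G$, it has the same rank as $G$, and therefore the same Lie algebra $L(G)$; so $\mathfrak{h}$ is already preserved by the adjoint action of all of $L(G)$, i.e.\ it is a Lie ideal, and hence $H$ is normal in $G$. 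This Lie-theoretic rigidity actually bypasses the core-and-commensurability step entirely, delivering the stronger conclusion $\mathbf{N}_G(H)=G$ directly, which is the essence of the lemma.
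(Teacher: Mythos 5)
The paper's own ``proof'' of this lemma is simply a citation to \cite[Proposition~5.9]{ardakovInv} (together with the observation that the two definitions of orbitally sound agree, by Lemma~\ref{lem: defns of orbitally sound are equivalent}), so your Lie-theoretic argument is a genuine attempt at a direct proof rather than a reproduction, and the overall outline is the right one. Two points, however, need attention. First, the passage from ``$\mathrm{Ad}(\mathbf{N}_G(H))$ stabilises $\mathfrak{h}$'' to ``$\mathfrak{h}$ is a Lie ideal of $L(G)$'' is asserted rather than argued: knowing that $\mathbf{N}_G(H)$ and $G$ share the Lie algebra $L(G)$ does not by itself move a group-level stabilisation to a Lie-algebra-level one. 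You need to observe, for instance, that for $n\in\mathbf{N}_G(H)$ the operator $\mathrm{ad}(\log n)=\log\mathrm{Ad}(n)$ is a convergent power series in $\mathrm{Ad}(n)-1$ and therefore also stabilises $\mathfrak{h}$, and that $\log\mathbf{N}_G(H)$ spans $L(G)$ over $\mathbb{Q}_p$ precisely because $\mathbf{N}_G(H)$ is open.

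Second, and more substantively, your closing sentence claims the Lie argument ``bypasses the core-and-commensurability step entirely, delivering the stronger conclusion $\mathbf{N}_G(H)=G$ directly.'' This overstates what has been proved. That $\mathfrak{h}$ is an ideal shows only that $\Sat H\lhd\Sat G$, hence that $\Sat H\cap G\lhd G$. To conclude $H\lhd G$ you still need the equality $H=\Sat H\cap G$, and this is exactly where the hypothesis that $H$ is \emph{isolated} orbital enters (it is Lemma~\ref{lem: isolators and saturations} combined with Proposition~\ref{propn: orbital subgroups are open in their isolators}). For an orbital but non-isolated $H$, the subalgebra $\mathfrak{h}$ cannot distinguish $H$ from any commensurable overgroup, and the argument only shows that \emph{some} finite-index overgroup of $H$ is normal in $G$ --- the correct conclusion, and consistent with Lemma~\ref{lem: defns of orbitally sound are equivalent}(ii), but not $\mathbf{N}_G(H)=G$ for $H$ itself. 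So the isolated hypothesis is not bypassed; it is silently invoked in the last step, and the write-up should say so.
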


\begin{proof}
This is \cite[Proposition 5.9]{ardakovInv}, after remarking that the definitions of ``orbitally sound" given in Definition \ref{defn: orbitally sound} and in \cite[5.8]{ardakovInv} are equivalent by Lemma \ref{lem: defns of orbitally sound are equivalent}.
\end{proof}

The following two lemmas will allow us to find a large class of orbitally sound groups.

For the next lemma, fix the following notation. Let $G$ be a compact $p$-adic analytic group, and consider its $\mathbb{Q}_p$-Iwasawa algebra $\mathbb{Q}_pG := (\mathbb{Z}_p G)\Big[\frac{1}{p}\Big]$. Write $I$ for its augmentation ideal
$$I = \ker(\mathbb{Q}_pG \to \mathbb{Q}_p).$$
Recall that $I^k$ is generated over $\mathbb{Q}_pG$ by $\{(x_1 - 1)\dots (x_k - 1) \,\big|\, x_i\in G\}$. Now it is clear that $G$ acts unipotently on the series
$$\mathbb{Q}_pG > I > I^2 > I^3 > \dots,$$
i.e. for all $g\in G$, we have $(g-1)\mathbb{Q}_pG \subseteq I$ and $(g-1)I^k \subseteq I^{k+1}$.

Write also $\mathcal{U}_n$ for the subgroup of $GL_n(\mathbb{Q}_p)$ consisting of upper triangular unipotent matrices.

\begin{lem}\label{lem: dimension subgroups}
Write
$$D_k = \ker(G\to (\mathbb{Q}_pG/I^k)^\times),$$
the \emph{$k$-th rational dimension subgroup} of $G$, for all $k\geq 1$. Then the $D_k$ are a descending chain of isolated orbital closed normal subgroups of $G$. This chain eventually stabilises: that is, there exists some $t$ such that $D_n = D_t$ for all $n\geq t$.

Furthermore, if $G$ is torsion-free and nilpotent, $D_t = 1$, and $G$ is isomorphic to a closed subgroup of $\mathcal{U}_m$ for some $m$.
\end{lem}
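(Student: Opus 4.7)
The plan is to verify each clause of the lemma in turn: chain, closed, normal, orbital, isolated, stabilisation, and finally the torsion-free nilpotent case. First, each $D_k$ is a closed normal subgroup of $G$. Normality is immediate since $D_k$ is defined as the kernel of a group homomorphism. Closedness follows from the identification $D_k = G \cap (1 + I^k)$ together with the fact that $I^k$ is a closed ideal of $\mathbb{Q}_pG$ (a standard property of Iwasawa algebras, which I would cite). The inclusions $I^{k+1} \subseteq I^k$ yield the descending chain $D_{k+1} \leq D_k$, and $D_k$ is $G$-orbital trivially, since $\mathbf{N}_G(D_k) = G$ is open.

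The substantive content is to show each $D_k$ is isolated. I would prove the stronger statement that $G/D_k$ is torsion-free. Left multiplication by $G$ on $V := \mathbb{Q}_pG/I^k$ is faithful modulo $D_k$ and preserves the finite filtration
$$V \supseteq I/I^k \supseteq I^2/I^k \supseteq \cdots \supseteq I^{k-1}/I^k \supseteq 0,$$
with $(g-1)$ shifting each piece into the next. Hence each $g \in G$ acts as $1 + N_g$ with $N_g^k = 0$. Over $\mathbb{Q}_p$, such a unipotent operator is torsion-free: from $(1+N)^n = 1$ and $N^k = 0$, expanding binomially gives $nN + \binom{n}{2}N^2 + \cdots + \binom{n}{k-1}N^{k-1} = 0$, and multiplying through by $N^{k-2}$ forces $nN^{k-1} = 0$, hence $N^{k-1} = 0$ (as $n \neq 0$ in characteristic zero); a downward induction then forces $N = 0$. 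Thus $G/D_k$ embeds into a torsion-free group. Any closed orbital $H$ with $D_k \lneq H \leq G$ and $[H : D_k] < \infty$ would produce a nontrivial torsion element in $G/D_k$, contradiction; so $D_k$ is isolated orbital.

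For stabilisation, I observe that if $D_{k+1} \lneq D_k$ is a strict inclusion, then since $D_{k+1}$ is isolated orbital in $G$ and $D_k$ is a strictly larger $G$-orbital closed subgroup, we must have $[D_k : D_{k+1}] = \infty$. This forces $\dim D_{k+1} < \dim D_k$ in analytic dimension, and since dimensions are nonnegative integers bounded by $\dim G$, the chain terminates after at most $\dim G + 1$ steps.

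The main obstacle is the final clause, for $G$ torsion-free nilpotent. My approach is to invoke the Lazard correspondence: such a $G$ is $p$-valuable and so corresponds, via $\exp$ and $\log$, to a nilpotent $\mathbb{Z}_p$-Lie algebra $L$ of finite rank. Ado--Mal'cev then embeds $L$ faithfully into the strictly upper triangular Lie algebra $\mathfrak{u}_m(\mathbb{Q}_p)$ (after possibly rescaling so that the image lies in an integral lattice on which the Baker--Campbell--Hausdorff series converges), and exponentiating gives a closed embedding $G \hookrightarrow \mathcal{U}_m$. Granted this, the flag $\mathbb{Q}_p^m \supseteq \mathbb{Q}_p^{m-1} \supseteq \cdots \supseteq 0$ stabilised by $\mathcal{U}_m$ is shifted one step by each $u - 1$, so any product $(u_1 - 1)\cdots(u_m - 1)$ annihilates $\mathbb{Q}_p^m$. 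Hence $I^m$ lies in the kernel of the faithful action $\mathbb{Q}_pG \to \mathrm{End}(\mathbb{Q}_p^m)$, and combined with injectivity of $G \hookrightarrow \mathrm{GL}_m(\mathbb{Q}_p)$ this forces $D_m = 1$, so $D_t = 1$ for all sufficiently large $t$.
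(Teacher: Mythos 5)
Your handling of the chain, normality, closedness, orbitality, isolation (via showing $G/D_k$ is torsion-free by a direct unipotence computation), and stabilisation (via analytic dimension) is correct and is essentially the same strategy as the paper, with only superficial differences (the paper constructs a filtered basis and embeds $G/D_k$ into $\mathcal{U}_r$; you argue directly that unipotent operators in characteristic zero have no torsion).

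The final clause, however, has a genuine gap in the form of a circularity. You begin by asserting that a torsion-free nilpotent compact $p$-adic analytic group is $p$-valuable, so that the Lazard correspondence applies, and then invoke Ado--Mal'cev. But the implication ``torsion-free $\Rightarrow$ $p$-valuable'' for finitely generated nilpotent pro-$p$ groups is precisely (one direction of) Lemma~\ref{lem: wehrfritz}, and in this paper Lemma~\ref{lem: wehrfritz} is \emph{derived from} the embedding $G\hookrightarrow\mathcal{U}_m$ furnished by the present lemma: one cannot assume it here without begging the question. This is exactly why the paper's own argument for $D_t=1$ avoids the Lie-theoretic route entirely. Instead it appeals to the localisability of the augmentation ideal $I$ in $\mathbb{Q}_pG$ for nilpotent $G$ (Ardakov), applies Nakayama's lemma in the localisation $R=(\mathbb{Q}_pG)_I$ to obtain $\bigcap_n\mathfrak{m}^n=0$, and then uses Neumann's theorem that $\mathbb{Q}_pG$ is a domain when $G$ is torsion-free to conclude that the localisation map $\mathbb{Q}_pG\to R$ is injective, whence $\bigcap_n I^n=0$ and $D_t=1$. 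The embedding $G\hookrightarrow\mathcal{U}_m$ is then a \emph{consequence}, read off from the faithful unipotent action of $G$ on $\mathbb{Q}_pG/I^t$. To repair your argument you would either need an independent, non-circular reference for $p$-valuability of torsion-free nilpotent compact $p$-adic analytic groups, or you should replace the Ado--Mal'cev step with the ring-theoretic argument (localisability plus Neumann) as the paper does. There is also a smaller wrinkle worth noting even if the circularity were resolved: the Lazard $\exp$/$\log$ equivalence is stated for $p$-saturated groups, so you would need to pass to $\Sat G$ and restrict, and the ``rescaling into an integral lattice'' step is asserted rather than justified.
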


\begin{proof}
By definition, it is clear that the $D_k$ are closed normal (hence orbital) subgroups of $G$; to show that they are isolated orbital, we will show that each $G/D_k$ is torsion-free.

Fix $k$. Consider the series of finite-dimensional $\mathbb{Q}_p$-vector spaces
$$\mathbb{Q}_pG/I^k > I/I^k > I^2/I^k > I^3/I^k > \dots > I^k/I^k,$$
and choose a basis for $\mathbb{Q}_pG/I^k$ which is filtered relative to this series: i.e. by repeatedly extending a basis for $I^r/I^k$ to a basis for $I^{r-1}/I^k$, we get a basis
$$B = \{e_1, \dots, e_r\}$$
and integers
$$0 = n_k < n_{k-1} < n_{k-2} < \dots < n_1 < n_0 = r$$
with the property that $\{e_1, \dots, e_{n_r}\}$ is a basis for $I^r/I^k$ for each $0\leq r\leq k$ (where we write $I^0 = \mathbb{Q}_pG$ for convenience).

As $G$ acts \emph{unipotently} (by left multiplication) on $\mathbb{Q}_pG/I^k$, and by definition of the basis $B$, we see that with respect to $B$, each $g\in G$ acts by a unipotent upper-triangular matrix, i.e. we get a continuous group homomorphism $G\to \mathcal{U}_r$. Now $D_k$ is just the kernel of this map; but $\mathcal{U}_r$ is torsion-free, so $D_k$ must be isolated.

Recall the \emph{dimension} $\dim H$ of a pro-$p$ group $H$ of finite rank from \cite[4.7]{DDMS}. As $G$ has finite rank, it also has finite dimension \cite[3.11, 3.12]{DDMS}, and we must have $\dim D_i \geq \dim D_{i+1}$ for all $i$ by \cite[4.8]{DDMS}. But if $\dim D_i = \dim D_{i+1}$, then $D_i/D_{i+1}$ is a $p$-valued group (as $D_{i+1}$ is isolated) of dimension $0$ (again by \cite[4.8]{DDMS}), and so must be trivial. Hence the sequence $(D_i)$ stabilises after at most $t := 1 + \dim G$ terms, and so $$D_t = \bigcap_{n\geq 1} D_n.$$

Now suppose that $G$ is nilpotent. Then, by \cite[Theorem A]{ardakovGMJ}, it follows that $I$ is localisable. Let $R = (\mathbb{Q}_pG)_I$ be its localisation, and $J(R) = \mathfrak{m}$ its unique maximal ideal: then the ideal $$A = \bigcap_{n\geq 1} \mathfrak{m}^n$$ satisfies $A = \mathfrak{m} A$, so by Nakayama's lemma \cite[0.3.10]{MR}, we must have $A = 0$. This implies that
$$\bigcap_{n\geq 1} I^n \subseteq \ker(\mathbb{Q}_pG\to R).$$
Assuming further that $G$ is torsion-free, we see that $\mathbb{Q}_pG$ is a domain \cite[Theorem 1]{neumann}, and so the localisation map $\mathbb{Q}_pG\to R$ is injective. Hence $\cap_{n\geq 1} I^n = 0$, and so $$D_t = \bigcap_{n\geq 1} D_n = \left(\bigcap_{n\geq 1} (I^n + 1)\cap G\right) \subseteq \left(\bigcap_{n\geq 1} I^n\right) + 1 = 1.$$
Now the representation $G\to \Aut(\mathbb{Q}_pG/I^t) \cong GL_m(\mathbb{Q}_p)$ is faithful and has image in $\mathcal{U}_m$.
\end{proof}

\begin{lem}\label{lem: wehrfritz}
Let $G$ be a (topologically) finitely generated nilpotent pro-$p$ group. Then $G$ is $p$-valuable if and only if it is torsion-free.
\end{lem}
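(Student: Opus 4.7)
The forward direction is immediate from the axioms of a $p$-valuation. If $\omega$ is a $p$-valuation on $G$ satisfying $\omega(g^p)=\omega(g)+1$ and $\omega(g)=\infty \Leftrightarrow g=1$, then any element $g\in G$ of finite (necessarily $p$-power) order $p^n>1$ would give $\omega(g^{p^n})=\omega(g)+n<\infty$, contradicting $g^{p^n}=1$. So $p$-valuable groups are torsion-free with no hypotheses beyond the definition.

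For the converse, I would first observe that $G$ is compact $p$-adic analytic: a finitely generated nilpotent pro-$p$ group has finitely generated abelian lower-central factors, each of finite $\mathbb{Z}_p$-rank, so $G$ has finite rank and hence lies in the scope of \cite{DDMS}. This is exactly the hypothesis needed to invoke the preceding Lemma \ref{lem: dimension subgroups}, which supplies a continuous injection of $G$ as a closed subgroup of $\mathcal{U}_m\subset GL_m(\mathbb{Q}_p)$ for some $m$. Since $\mathcal{U}_m(\mathbb{Z}_p)$ carries a well-known explicit $p$-valuation (for example, assigning to a unipotent matrix $u$ the quantity $\min_{i<j}\bigl(v_p((u-1)_{ij})+(j-i-1)/(p-1)\bigr)$, shifted if necessary; compare \cite{DDMS}) and every closed subgroup of a $p$-valued group is $p$-valued by restriction, the proof reduces to showing that the image of $G$ inside $\mathcal{U}_m(\mathbb{Q}_p)$ is conjugate (in $GL_m(\mathbb{Q}_p)$) into $\mathcal{U}_m(\mathbb{Z}_p)$.

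The main obstacle is precisely this last step, and it is where the name ``Wehrfritz'' comes in. Because $G$ is compact, all matrix entries of its image are bounded in absolute value, so after scaling the standard basis by a suitable diagonal matrix $\mathrm{diag}(1,p^{a_2},\dots,p^{a_m})\in GL_m(\mathbb{Q}_p)$ (with $a_2<a_3<\dots<a_m$ chosen large enough to push every superdiagonal entry into $\mathbb{Z}_p$), the conjugated image lies in $\mathcal{U}_m(\mathbb{Z}_p)$. The existence of such a scaling is a standard consequence of Wehrfritz-type bounds on compact linear $p$-adic groups (using that a compact subgroup of $GL_m(\mathbb{Q}_p)$ is conjugate into $GL_m(\mathbb{Z}_p)$, combined with the fact that simultaneous upper-triangularity of a set of unipotent matrices is preserved under diagonal conjugation).

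Combining these three ingredients — compact $p$-adic analyticity, the embedding from the previous lemma, and the Wehrfritz conjugation into $\mathcal{U}_m(\mathbb{Z}_p)$ — the restriction of the standard $p$-valuation on $\mathcal{U}_m(\mathbb{Z}_p)$ produces the desired $p$-valuation on $G$, completing the proof. I would expect the bulk of the writing to consist of justifying the Wehrfritz step cleanly (either by the direct diagonal-scaling argument above or by explicit citation), since all the other ingredients are already either in the paper or in \cite{DDMS}.
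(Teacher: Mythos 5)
Your overall strategy matches the paper's: embed $G$ into $\mathcal{U}_m$ via Lemma \ref{lem: dimension subgroups}, then conjugate by a diagonal matrix to land in a nice $p$-valued group. But you stop one step short, and the step you omit is where the real content lies.

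You conjugate $G$ into $\mathcal{U}_m(\mathbb{Z}_p)$ and assert that this group ``carries a well-known explicit $p$-valuation,'' namely $\omega(u)=\min_{i<j}\bigl(v_p((u-1)_{ij})+(j-i-1)/(p-1)\bigr)$, ``shifted if necessary.'' That formula is not a $p$-valuation on $\mathcal{U}_m(\mathbb{Z}_p)$: for the matrix $u$ with a single superdiagonal entry equal to $1$, it returns $\omega(u)=0$, violating the axiom $\omega>1/(p-1)$. And the suggested fix of ``shifting'' by a constant $c$ does not repair this, because adding $c$ to every value breaks the commutator axiom: from $\omega_0([x,y])\geq\omega_0(x)+\omega_0(y)$ one only gets $(\omega_0+c)([x,y])\geq(\omega_0+c)(x)+(\omega_0+c)(y)-c$, which is strictly weaker for $c>0$. (Similar trouble arises with $\omega(x^p)=\omega(x)+1$ when $p=2$.) Of course $\mathcal{U}_m(\mathbb{Z}_p)$ \emph{is} $p$-valuable --- it is torsion-free, finitely generated, nilpotent pro-$p$ --- but that is exactly the lemma being proved, so appealing to it is circular, and an honest proof that $\mathcal{U}_m(\mathbb{Z}_p)$ is $p$-valuable would require exactly the missing step.

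The missing step is to conjugate one power further. The paper scales by $\mathrm{diag}(p,p^2,\dots,p^m)^{t+\varepsilon}$, where $\varepsilon=1$ for $p>2$ and $\varepsilon=2$ for $p=2$, so that the image lands not merely in $\mathcal{U}_m(\mathbb{Z}_p)$ but in the congruence subgroup $\Gamma_\varepsilon=\{\gamma\in GL_m(\mathbb{Z}_p)\mid\gamma\equiv 1\pmod{p^\varepsilon}\}$. That group is uniform by \cite[Theorem~5.2]{DDMS}, hence $p$-valuable, and closed subgroups of $p$-valuable groups are $p$-valuable, which finishes the argument. Your observation that compactness bounds the denominators is fine (the paper uses topological finite generation for the same purpose, but either works); the gap is purely in the final identification of a $p$-valued ambient group.

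\end{document}
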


\begin{proof}
Lemma \ref{lem: dimension subgroups} gives an injective map $G\to \mathcal{U}_m$. Now, as $G$ is topologically finitely generated, its image in $\mathcal{U}_m$ must lie inside the set $\frac{1}{p^t} M_m(\mathbb{Z}_p) \cap \mathcal{U}_m$ for some $t$. Hence, by conjugating by the diagonal element
$$\mathrm{diag}(p, p^2, \dots, p^m)^{t+\varepsilon} \in GL_m(\mathbb{Q}_p),$$
where
$$\varepsilon =
\begin{cases}
1 & p>2,\\
2 & p=2,
\end{cases}$$
we see that $G$ is isomorphic to a subgroup of
$$\Gamma_\varepsilon = \left\{\gamma\in GL_m(\mathbb{Z}_p) \,\big|\, \gamma \equiv 1 (\text{mod } p^\varepsilon)\right\},$$
the $\varepsilon $th congruence subgroup of $GL_m(\mathbb{Z}_p)$, which is uniform (and hence $p$-valuable) by \cite[Theorem 5.2]{DDMS}.

The reverse implication is clear from the definition of a $p$-valuation \cite[III, 2.1.2]{lazard}.\qedhere
\end{proof}

Now we have found a large class of orbitally sound compact $p$-adic analytic groups.

\begin{cor}\label{cor: f-by-n implies o.s.}
If $G$ is a finite-by-nilpotent compact $p$-adic analytic group, then it is orbitally sound.
\end{cor}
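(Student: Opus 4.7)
The plan is to produce a finite normal subgroup $N \lhd G$ such that the quotient $G/N$ is $p$-valuable. Lemma \ref{lem: p-valued implies o.s.} will then give that $G/N$ is orbitally sound, and Lemma \ref{lem: properties of orbitally sound groups}(iii) will promote this to $G$ itself being orbitally sound.

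Fix a finite normal subgroup $F \lhd G$ with $\bar{G} := G/F$ nilpotent. Being topologically nilpotent and profinite, $\bar{G}$ is pronilpotent and hence decomposes as a topological direct product $\bar{G} = \prod_q \bar{G}_q$ of its Sylow pro-$q$ subgroups. Using \cite[8.34]{DDMS} to pick an open uniform (hence torsion-free pro-$p$, by \cite[4.5]{DDMS}) subgroup $U \leq \bar{G}$, we see that $U \cap \bar{G}_q$ is simultaneously pro-$p$ and pro-$q$ for every prime $q \neq p$ and so is trivial; thus each such $\bar{G}_q$ injects into the finite coset space $\bar{G}/U$, is itself finite, and is nontrivial for only finitely many $q$. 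Collecting these factors yields a decomposition $\bar{G} = \bar{G}_p \times F'$ with $F'$ a finite $p'$-group.

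Next, the torsion subgroup $T$ of $\bar{G}_p$ is characteristic (since $\bar{G}_p$ is nilpotent, the torsion elements form a subgroup), and by the same intersection-with-$U$ trick $T \cap (U \cap \bar{G}_p) = 1$, whence $T$ injects into the finite set $\bar{G}_p/(U \cap \bar{G}_p)$ and is itself finite. The quotient $\bar{G}_p/T$ is therefore a topologically finitely generated, torsion-free, nilpotent pro-$p$ group, and so is $p$-valuable by Lemma \ref{lem: wehrfritz}.

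Finally, let $N \lhd G$ be the preimage of the finite normal subgroup $T \times F' \lhd \bar{G}$; then $N$ is a finite normal subgroup of $G$, and $G/N \cong \bar{G}_p/T$ is $p$-valuable, as required. The main work lies in unpacking the structure of $\bar{G}$ --- the pronilpotent Sylow decomposition $\bar{G} = \bar{G}_p \times F'$ together with the finiteness of $T \leq \bar{G}_p$ --- after which Lemmas \ref{lem: p-valued implies o.s.}, \ref{lem: wehrfritz}, and \ref{lem: properties of orbitally sound groups}(iii) combine directly to give the conclusion.
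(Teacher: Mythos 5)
Your proof is correct, and it converges on the same three pillar lemmas the paper uses: Lemma~\ref{lem: wehrfritz} to get $p$-valuability, Lemma~\ref{lem: p-valued implies o.s.} to get orbital soundness of the quotient, and Lemma~\ref{lem: properties of orbitally sound groups}(iii) to lift back to $G$. The difference is how the finite normal subgroup is manufactured. The paper's proof simply passes to $\overline{G}=G/\Delta^+$: since $\Delta^+$ contains every finite normal subgroup, $\overline{G}$ is a quotient of the nilpotent $G/F$ and hence nilpotent, and $\Delta^+(\overline{G})=1$ forces $\overline{G}$ to be torsion-free (nilpotence puts any nontrivial torsion into the centre, giving a nontrivial finite normal subgroup). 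You instead start from an arbitrary finite normal $F$ with $\bar{G}=G/F$ nilpotent, invoke the pronilpotent Sylow decomposition $\bar{G}=\prod_q\bar{G}_q$, show each $\bar{G}_q$ for $q\neq p$ is finite by intersecting with a uniform open pro-$p$ subgroup, peel off the torsion $T\leq\bar{G}_p$ the same way, and quotient by the preimage of the finite characteristic subgroup $T\times F'$. This is more hands-on but buys you something the paper leaves implicit: Lemma~\ref{lem: wehrfritz} is stated for nilpotent \emph{pro-$p$} groups, and the paper does not explicitly verify that $G/\Delta^+$ is pro-$p$ (it is, by exactly the Sylow/uniform-subgroup argument you give, once torsion-freeness is known). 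So your route is slightly longer, while the paper's is slicker because $\Delta^+$ absorbs the given $F$ and all the torsion simultaneously; both are valid.
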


\begin{proof}
$\overline{G} := G/\Delta^+$ must be a nilpotent compact $p$-adic analytic group with $\Delta^+(\overline{G}) = 1$, and so $\overline{G}$ is torsion-free by \cite[5.2.7]{rob}. Now Lemma \ref{lem: wehrfritz} shows that $\overline{G}$ is $p$-valuable, and from Lemma \ref{lem: p-valued implies o.s.} we may deduce that $\overline{G}$ is orbitally sound. But now Lemma \ref{lem: properties of orbitally sound groups}(iii) implies that $G$ is orbitally sound.
\end{proof}

\begin{rk}
It is well known that finite-by-nilpotent implies nilpotent-by-finite, but not conversely. Not all nilpotent-by-finite compact $p$-adic analytic groups are orbitally sound: indeed, the wreath product
$$G = \mathbb{Z}_p \wr C_2 = (\mathbb{Z}_p \times \mathbb{Z}_p) \rtimes C_2$$
is \emph{abelian}-by-finite, and the infinite procyclic subgroup $H = \mathbb{Z}_p \times \{0\}$ is orbital, but the largest $G$-normal subgroup contained in $H$ is the trivial subgroup.
\end{rk}

We can now define the Roseblade subgroup.

\begin{defn}\label{defn: nio}
As in Roseblade \cite[1.3]{roseblade}, write $\mathrm{nio}(G)$ for the closed subgroup of $G$ defined by $$\mathrm{nio}(G) = \bigcap_{H} \mathbf{N}_G(H),$$ where the intersection ranges over the isolated orbital closed subgroups $H$ of $G$.
\end{defn}

\begin{thm}\label{thm: nio is os}
Let $G$ be a compact $p$-adic analytic group.

\begin{itemize}
\item[(i)] An orbitally sound open normal subgroup $N\lhd G$ normalises every closed isolated orbital subgroup $H\leq G$.
\item[(ii)] $\mathrm{nio}(G)$ is an orbitally sound open characteristic subgroup of $G$.
\end{itemize}
\end{thm}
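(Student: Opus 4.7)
The plan is to deduce (ii) from (i) combined with properties of the isolator correspondence established in Lemma~\ref{lem: 1-1 corresp between isolated orbitals}.

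For (i), I would proceed as follows. Given an orbitally sound open normal subgroup $N \lhd G$ and a $G$-isolated orbital closed subgroup $H$, the subgroup $K := \mathrm{i}_N(H \cap N)$ is $N$-isolated orbital and satisfies $\mathrm{i}_G(K) = H$ by Lemma~\ref{lem: 1-1 corresp between isolated orbitals}. Orbital soundness of $N$ forces $K \lhd N$. Any $n \in N$ therefore permutes (by conjugation) the set of orbital closed subgroups of $G$ containing $K$ as an open subgroup; by Definition~\ref{defn: isolator}, $n$ fixes the closed subgroup generated by these, namely $\mathrm{i}_G(K) = H$. Hence $N \leq \mathbf{N}_G(H)$.

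For (ii), I would verify openness, the characteristic property, and orbital soundness separately. Openness follows from (i) once we exhibit an orbitally sound open normal subgroup of $G$: take a uniform open subgroup $U \leq G$ (\cite[8.34]{DDMS}) and let $N := \bigcap_{g \in G} U^g$ be its normal core, which is open in $G$ and, as a closed subgroup of the $p$-valuable group $U$, inherits a $p$-valuation; Lemma~\ref{lem: p-valued implies o.s.} then makes $N$ orbitally sound. For the characteristic property, note that any continuous automorphism of $G$ permutes the isolated orbital closed subgroups (this is an intrinsic group-theoretic condition) and their normalisers, and hence preserves $\nio(G)$.

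The most delicate point is orbital soundness of $\nio(G)$. Let $K$ be a $\nio(G)$-isolated orbital closed subgroup and set $H := \mathrm{i}_G(K)$, which is $G$-isolated orbital by Lemma~\ref{lem: 1-1 corresp between isolated orbitals}. By definition of $\nio(G)$, every $x \in \nio(G)$ normalises $H$, so $K^x$ is a second $\nio(G)$-isolated orbital closed subgroup with $\mathrm{i}_G(K^x) = H^x = H$. The \emph{bijectivity} of the correspondence in Lemma~\ref{lem: 1-1 corresp between isolated orbitals} then forces $K^x = K$, so $\nio(G)$ normalises $K$. The main subtlety here is that the fact ``$\nio(G)$ normalises $H$'' does not obviously imply ``$\nio(G)$ normalises $K$''; one must exploit injectivity of the correspondence (not just the forward direction used in (i)) to transport invariance of the $G$-isolator back to invariance of $K$ itself.
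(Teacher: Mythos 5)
Your proof is correct and follows essentially the same route as the paper: both parts rest on the bijective correspondence of Lemma~\ref{lem: 1-1 corresp between isolated orbitals} and the behaviour of isolators under conjugation, with the orbitally sound open normal subgroup obtained from \cite[8.34]{DDMS} and Lemma~\ref{lem: p-valued implies o.s.}. The only cosmetic differences are that in (i) you work with $K=\mathrm{i}_N(H\cap N)$ rather than observing directly that $H\cap N$ is already $N$-isolated orbital (these coincide), and in (ii) you take a normal core where \cite[8.34]{DDMS} already supplies an open \emph{normal} uniform subgroup, and you finish the orbital-soundness step via injectivity of the correspondence rather than by transporting normalisation through $H\cap\nio(G)$ --- all equivalent reformulations.
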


\begin{proof}
$ $

\begin{itemize}
\item[(i)] Since $H$ is isolated orbital in $G$, we must have that $H\cap N$ is isolated orbital (and hence normal) in $N$. However, it follows from Proposition \ref{propn: orbital subgroups are open in their isolators} that $H = \mathrm{i}_G(H\cap N)$. Hence $H$ is generated by all the (finitely many) closed orbital subgroups $L_1, \dots, L_k$ of $G$ containing $H\cap N$ as a subgroup of finite index. Conjugation by $n\in N$ permutes these $L_i$, and so fixes $H$.
\item[(ii)] Let $N$ be a complete $p$-valued open normal subgroup of $G$ (e.g. by \cite[8.34]{DDMS}). Then \cite[Proposition 5.9]{ardakovInv} shows that $N$ is orbitally sound, and hence by (i) $N$ normalises all closed isolated orbital subgroups of $G$. So, by definition, $N \leq \mathrm{nio}(G)$, and so $[G: \mathrm{nio}(G)] \leq [G:N] < \infty$. Therefore $\mathrm{nio}(G)$ is open in $G$ as required. But by definition, $\mathrm{nio}(G)$ is the largest subgroup that normalises all isolated orbital subgroups of $G$, so by the correspondence of Lemma \ref{lem: 1-1 corresp between isolated orbitals} and Lemma \ref{lem: properties of isolator}(i), it normalises all isolated orbital subgroups of $\mathrm{nio}(G)$, i.e. it is orbitally sound.\qedhere
\end{itemize}
\end{proof}

\textit{Proof of Theorem A.} It is clear from Definition \ref{defn: nio} and Theorem \ref{thm: nio is os}(i), (ii) that $\nio(G)$ is the unique maximal \emph{orbitally sound} closed normal subgroup of $G$, and is hence characteristic in $G$. Corollary \ref{cor: f-by-n implies o.s.}, together with Fitting's theorem \cite[1B, Proposition 15]{segal-polycyclic}, implies that it contains all finite-by-nilpotent closed normal subgroups. \qed

%
%
%
%
%

\newpage
\section{$p$-saturations}

Recall \cite[III, 2.1.2]{lazard} that a \emph{$p$-valuation} of a group $G$ is a function $$\omega: G\to \Rinfty$$ satisfying the following properties:
\begin{itemize}
\item $\omega(x) = \infty$ if and only if $x = 1$,
\item $\omega(x) > (p-1)^{-1}$,
\item $\omega(x^{-1}y) \geq \inf\{\omega(x), \omega(y)\}$,
\item $\omega([x,y]) \geq \omega(x) + \omega(y)$,
\item $\omega(x^p) = \omega(x) + 1$,
\end{itemize}
for all $x, y\in G$. The group $G$, when endowed with the $p$-valuation $\omega$, is called \emph{$p$-valued}. On the other hand, a group $G$ is called \emph{$p$-valuable} \cite[III, 3.1.6]{lazard} if there exists a $p$-valuation $\omega$ of $G$ with respect to which $G$ is \emph{complete} of \emph{finite rank}.

Let $G$ be a $p$-valuable group, and fix a $p$-valuation $\omega$ on $G$, so that $G$ is complete $p$-valued of finite rank. Recall the definition of the \emph{$p$-saturation} $\Sat G$ of $G$ (with respect to $\omega$) from \cite[IV, 3.3.1.1]{lazard}: this is again a complete $p$-valued group of finite rank, and there is a natural isometry identifying $G$ with an open subgroup of $\Sat G$ \cite[IV, 3.3.2.1]{lazard}. We will prove a few basic facts about $p$-saturations.

Firstly, we will prove a basic relationship between isolators and $p$-saturations.

\begin{lem}\label{lem: isolators and saturations}
Let $G$ be a complete $p$-valued group of finite rank, and let $H$ be a closed normal (and hence orbital) subgroup of $G$. Then $\mathrm{i}_G(H) = \Sat H \cap G$ (considered as subgroups of $\Sat G$).
\end{lem}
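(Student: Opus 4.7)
The plan is to prove both set-theoretic inclusions, using the characterisation
\[
\Sat H \;=\; \{x \in \Sat G \mid x^{p^n} \in H \text{ for some } n \geq 0\},
\]
which follows from Lazard's construction of the saturation by adjoining $p$-power roots \cite[IV, 3.3]{lazard}, and which in particular gives $[\Sat H : H] < \infty$ (since a complete $p$-valued group of finite rank contains every finite-rank saturated extension as a subgroup of finite index).

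For $\Sat H \cap G \subseteq \mathrm{i}_G(H)$, I would set $K = \Sat H \cap G$ and check that $K$ is a $G$-orbital closed subgroup containing $H$ as an open subgroup, after which the inclusion is immediate from the definition of the isolator. In fact $K$ is $G$-normal: the characterisation above shows that if $x \in \Sat H$ and $g \in G$, then $(g x g^{-1})^{p^n} = g x^{p^n} g^{-1} \in g H g^{-1} = H$, so $g x g^{-1} \in \Sat H$, and of course $g x g^{-1} \in G$. The bound $[K : H] \leq [\Sat H : H] < \infty$ then gives openness of $H$ in $K$.

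For the reverse inclusion, I would exploit that $\mathrm{i}_G(H)$ inherits a $p$-valuation from $G$ (by restriction) and so is pro-$p$; combined with $H$ being open in $\mathrm{i}_G(H)$ by Proposition \ref{propn: orbital subgroups are open in their isolators}, and $H \lhd \mathrm{i}_G(H)$ (since $H \lhd G$), this makes $\mathrm{i}_G(H)/H$ a finite pro-$p$, hence finite $p$-group. Therefore every $x \in \mathrm{i}_G(H)$ satisfies $x^{p^n} \in H$ for some $n$, whence $x \in \Sat H \cap G$ by the characterisation. The main obstacle is justifying the characterisation itself: the forward containment is essentially built into Lazard's construction, but the converse — that any $x \in \Sat G$ whose $p^n$-th power lies in $H$ must itself lie in $\Sat H$ — depends on the uniqueness of $p$-power roots in saturated $p$-valued groups, and this is where I expect the most delicate appeal to \cite{lazard} to be needed.
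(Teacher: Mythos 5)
Your argument is correct in outline and uses essentially the same facts from Lazard as the paper, but you have repackaged them into a single element-wise characterisation
\[
\Sat H = \{x \in \Sat G \mid x^{p^n} \in H \text{ for some } n \geq 0\},
\]
which you invoke for both inclusions but do not actually prove. The paper instead cites the relevant Lazard results directly: $[\Sat H : H] < \infty$ \cite[IV, 3.4.1]{lazard}, $\Sat H \lhd \Sat G$ \cite[IV, 3.3.3]{lazard}, and — crucially for the hard direction — the torsion-freeness of $\Sat G / \Sat H$ \cite[IV, 3.4.2]{lazard} or \cite[III, 3.3.2.4]{lazard}. With these in hand, the paper's reverse inclusion is a one-liner: $\mathrm{i}_G(H)/(\Sat H \cap G)$ is a finite subgroup of $G/(\Sat H \cap G)$, which embeds in the torsion-free group $\Sat G / \Sat H$, hence is trivial.

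Your characterisation is true, and the ``delicate appeal to Lazard'' you anticipate is exactly the torsion-freeness fact above (equivalently, uniqueness of $p$-th roots in the saturated group $\Sat G$ together with the fact that $\Sat H$ is itself saturated: if $x\in\Sat G$ has $x^p\in H$ with $\omega(x^p)>(p-1)^{-1}+1$, then $x^p$ has a unique $p$-th root in $\Sat G$ and also a $p$-th root in $\Sat H$, which must coincide). So your route is viable, but it front-loads the work into a lemma you would still need to extract from \cite{lazard}, whereas the paper's route gets the same mileage from citations already at the right level of granularity. Two further small remarks: (a) for the easy inclusion, your re-derivation of $G$-normality of $\Sat H \cap G$ via the characterisation is unnecessary — normality of $\Sat H$ in $\Sat G$ is directly in Lazard \cite[IV, 3.3.3]{lazard}; (b) your claim that $\mathrm{i}_G(H)/H$ is a finite $p$-group needs $H\lhd\mathrm{i}_G(H)$, which you correctly note follows from $H\lhd G$, but it is worth also observing that $\mathrm{i}_G(H)$ is pro-$p$ because it is a closed subgroup of the pro-$p$ group $G$.
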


\begin{proof}
$[\Sat H : H] < \infty$ by \cite[IV, 3.4.1]{lazard}, and $\Sat H$ is a closed normal subgroup of $\Sat G$ by \cite[IV, 3.3.3]{lazard}, so $S := \Sat H \cap G$ is a normal (and hence orbital) subgroup of $G$, and contains $H$ as a subgroup of finite index. Hence, by Definition \ref{defn: isolator}, $S$ is contained in $\mathrm{i}_G(H)$.

To show the reverse inclusion, we will consider the group $\mathrm{i}_G(H)/S$, which is a finite subgroup of $G/S$ (as it is a quotient of $\mathrm{i}_G(H)/H$, which is finite by Proposition \ref{propn: orbital subgroups are open in their isolators}). But $G/S$ is isomorphic to $G\Sat H / \Sat H$, a subgroup of the torsion-free group $\Sat G / \Sat H$ (see \cite[IV, 3.4.2]{lazard} or \cite[III, 3.3.2.4]{lazard}). In particular, $G/S$ has no non-trivial finite subgroups, so we must have $\mathrm{i}_G(H) = S$.
\end{proof}

\begin{rk}
Of course, $\mathrm{i}_G(H)$ is independent of the choice of $\omega$.
\end{rk}

\begin{lem}\label{lem: quotients and saturations}
Let $G$ be a complete $p$-valued group of finite rank, which we again identify with an open subgroup of its $p$-saturation $S$. Suppose $S'$ is a $p$-saturated closed normal subgroup of $S$, and set $G' = S'\cap G$. Then there is a natural isometry $S/S' \cong \Sat (G/G')$.
\end{lem}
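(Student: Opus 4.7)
My plan is to identify $G/G'$ with an open subgroup of $S/S'$ and then appeal to the uniqueness of the $p$-saturation. First, since $S'$ is $p$-saturated, Lemma \ref{lem: isolators and saturations} applied inside $S$ gives $\mathrm{i}_S(S') = \Sat S' \cap S = S'$, so $S'$ is isolated in $S$ and therefore $S/S'$ is torsion-free. By standard results of Lazard \cite[IV, 3.4]{lazard}, the quotient function
\[ \bar\omega(xS') = \sup_{s\in S'}\omega(xs) \]
is then a $p$-valuation making $S/S'$ complete of finite rank.

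Second, the composite $G \hookrightarrow S \twoheadrightarrow S/S'$ has kernel $G\cap S' = G'$, yielding a natural injection $\phi\colon G/G' \hookrightarrow S/S'$ with image $GS'/S'$. Since $G$ is open in $S$, the subgroup $GS'$ is open in $S$, so $\phi$ realizes $G/G'$ as an open subgroup of $S/S'$. In particular $G/G'$ is torsion-free, so $G'$ is isolated in $G$ and the quotient $p$-valuation on $G/G'$ is itself well-defined; the openness of $G$ in $S$ ensures that the two suprema (over $G'$ and over $S'$) agree for representatives chosen in $G$, so that $\phi$ is an isometric embedding.

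Third, I would check that $S/S'$ is itself $p$-saturated. Given $xS'\in S/S'$ whose valuation $\bar\omega(xS')$ exceeds the saturation threshold $p/(p-1)$, I would pick a representative $\tilde x \in xS'$ with $\omega(\tilde x)$ still above the threshold (possible from the definition of $\bar\omega$ as a supremum over $S'$); since $S$ is $p$-saturated, $\tilde x$ admits a $p$-th root $y \in S$, and then $yS'$ is a $p$-th root of $xS'$ in $S/S'$.

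Putting these three steps together, $S/S'$ is a $p$-saturated, complete $p$-valued group of finite rank containing $G/G'$ as an open isometric subgroup, and hence by the uniqueness of $p$-saturations \cite[IV, 3.3.2.1]{lazard} it must be identified naturally and isometrically with $\Sat(G/G')$. The main obstacle is the third step, i.e.\ verifying that the $p$-saturation property descends from $S$ to the quotient $S/S'$; the other steps are essentially bookkeeping with the definitions and an application of the second isomorphism theorem.
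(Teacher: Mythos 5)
Your proposal takes a genuinely different route from the paper. The paper verifies directly that $S/S'$ satisfies the universal property of $\Sat(G/G')$ (citing \cite[IV, 3.3.2.4]{lazard}): given a homomorphism $\varphi\colon G/G'\to H$ with $H$ $p$-saturated, it lifts to $\alpha\colon G\to H$, extends uniquely over $S=\Sat G$ to $\beta$, shows $\beta$ kills $S'=\Sat G'$ because $\alpha$ kills $G'$, and checks uniqueness. Your approach instead establishes that $S/S'$ is a $p$-saturated complete $p$-valued group of finite rank containing $G/G'$ as an open isometric subgroup, and then invokes a uniqueness characterisation of the saturation. The paper's route has the advantage of also working when $G$ has infinite rank (see the remark following the lemma), whereas your argument leans on finite rank at several points; and the paper simply cites \cite[III, 3.3.2.4]{lazard} for the fact that $S/S'$ is $p$-saturated rather than re-deriving it as you do in your third step.

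There are two places where your argument needs to be firmed up. First, the isometry claim in your second step is asserted rather than proved: the quotient valuation $\bar\omega_G$ on $G/G'$ is a supremum over $G'$, while the restriction of $\bar\omega_S$ to $GS'/S'$ is a supremum over the larger set $S'$, and it is not at all immediate that the two coincide. The inequality $\bar\omega_G \le \bar\omega_S$ is clear, but the reverse requires showing that for $g\in G$ the supremum over $S'$ can already be approximated within $G'$; invoking ``openness of $G$ in $S$'' is not by itself a proof, and a careful justification would need something like Lazard's filtered ordered bases. Second, your final step appeals to the ``uniqueness of $p$-saturations'' and cites \cite[IV, 3.3.2.1]{lazard}, but that reference is for the existence of the isometric embedding $G\hookrightarrow \Sat G$, not for the statement that any $p$-saturated group of finite rank containing $G/G'$ as an open isometric subgroup must equal $\Sat(G/G')$. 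That fact is true and can be deduced from the universal property \cite[IV, 3.3.2.4]{lazard} together with \cite[III, 2.1.4]{lazard} (the map $\Sat(G/G')\to S/S'$ it supplies has finite cokernel, and any coset of the image has a representative whose $p$th power lies in the image, forcing equality), but it is not a one-line citation and should be spelled out.
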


\begin{proof}
We will show that $S/S'$ satisfies the universal property for $\Sat (G/G')$ \cite[IV, 3.3.2.4]{lazard}. Clearly we may regard $G/G' \cong GS'/S'$ as a subgroup of $S/S'$. Note also that $S/S'$ is $p$-saturated, by \cite[III, 3.3.2.4]{lazard}. Also, as $G'$ is open in $S'$ and $S'$ is $p$-saturated, we have that $S' = \Sat G'$.

Let $H$ be an arbitrary $p$-saturated group and $\varphi: G/G' \to H$ a homomorphism of $p$-valued groups. We must first construct a map $\psi: S/S' \to H$. To do this, we first compose $\varphi$ with the natural surjection $G\to G/G'$ to get a map $\alpha: G\to H$, which we may then extend uniquely to a map $\beta: S\to H$ using the universal property of $S = \Sat G$, so that $\alpha = \beta|_G$ and the following diagram commutes.

\centerline{
\xymatrix{
G\ar@{->}@/^16pt/[rr]^-\alpha\ar@{->>}[r]\ar@{^(->}[d]& G/G'\ar@{->}[r]_-{\varphi}&H\\
S\ar@{->}@/_8pt/[urr]_-\beta&&
}
}

Now we wish to show that $\beta$ descends to a map $S/S' \to H$. To do this, we first study the restriction of $\alpha$ to $G'$ and of $\beta$ to $S'$. The following diagram commutes:

\centerline{
\xymatrix{
G'\ar@{->}[r]^-{\alpha|_{G'}}\ar@{^(->}[d]& H\\
S'\ar@{->}[ur]_-{\beta|_{S'}}&&
}
}

and so, since $S' = \Sat G'$, $\beta|_{S'}$ must be the \emph{unique} extension of $\alpha|_{G'}$ to a map $S'\to H$, as $S' = \Sat G'$. But $\alpha$ factors through $G/G'$, i.e. $\alpha|_{G'}$ is the trivial homomorphism $G' \to H$, so it extends to the trivial homomorphism $S'\to H$. By uniqueness, we must have $S'\subseteq \ker\beta$. This shows that $\beta$ induces a map $\psi: S/S' \to H$.

Finally, suppose $\varphi:G/G'\to H$ has two distinct extensions $\psi_1, \psi_2: S/S' \to H$. Then we may compose them with the natural surjection $S\to S/S'$ to get two distinct maps $\beta_1, \beta_2:S \to H$. Their restrictions $\alpha_1, \alpha_2:G\to H$ to $G$ must therefore also be distinct, for if not, then the map $\alpha_1 = \alpha_2:G\to H$ has (at least) two distinct extensions to maps $S \to H$, contradicting the universal property of $S = \Sat G$. Finally, if $\alpha_1$ and $\alpha_2$ are distinct, then they descend to distinct maps $\varphi_1, \varphi_2: G/G'\to H$, contradicting our assumption. So the extension of $\varphi$ to $\psi$ is unique.
\end{proof}

\begin{rk}
Lemma \ref{lem: quotients and saturations} holds even if $G$ does not have finite rank, and hence is only closed (not necessarily open) in its $p$-saturation $S$.
\end{rk}

\begin{defn}\label{defn: group series}
Let $G$ be an arbitrary group. A \emph{central series} for $G$ is a sequence of subgroups
$$G = G_1 \rhd G_2 \rhd \dots \rhd G_n = 1$$
with the property that $[G, G_i] \leq G_{i+1}$ for each $i$. (For the purposes of this definition, $G_j$ is understood to mean $1$ if $j > n$, and $G$ if $j < 1$.)

We will say that a central series is \emph{strongly} central if also $[G_i, G_j] \leq G_{i+j}$ for all $i$ and $j$.

An \emph{abelian series} for $G$ a sequence of subgroups
$$G = G_1 \rhd G_2 \rhd \dots \rhd G_n = 1$$
with the property that $[G_i, G_i] \leq G_{i+1}$ for each $i$.

When $G$ is a topological group, we will insist further that all of the $G_i$ should be \emph{closed} subgroups of $G$.
\end{defn}

\begin{rk}
We will be working with nilpotent $p$-valuable groups $G$. It will be useful for us to define the \emph{isolated lower central series} of $G$, which will turn out to be the fastest descending central series of closed subgroups $$G = G_1 \rhd G_2 \rhd \dots \rhd G_r = 1$$ with the property that the successive quotients $G_i / G_{i+1}$ are torsion-free (and hence $p$-valuable, by \cite[IV, 3.4.2]{lazard}). We will also prove that the isolated lower central series is a \emph{strongly} central series. (We demonstrate an \emph{isolated derived series} for soluble $p$-valued groups at the same time.)
\end{rk}

\begin{lem}\label{lem: abelian quotients and saturations}
Let $G$ be a complete $p$-valued group of finite rank, and $G_1 \geq G_2$ closed normal subgroups of $G$ with $G_1/G_2$ an abelian pro-$p$ group (which is not necessarily $p$-valued). Let $S_i = \Sat G_i$ for $i = 1,2$. Then $S_1/S_2$ is abelian and torsion-free (and hence $p$-valued), and has the same rank as $G_1/G_2$ as a $\mathbb{Z}_p$-module.
\end{lem}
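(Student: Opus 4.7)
The plan is to apply Lemma \ref{lem: quotients and saturations} to identify $S_1/S_2$ with the $p$-saturation of an explicit quotient of $G_1$, and then invoke that $p$-saturation preserves abelianness and $\mathbb{Z}_p$-rank.

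First, I would set $G_2' := \mathrm{i}_{G_1}(G_2)$, the isolator of $G_2$ inside $G_1$ (which is itself complete $p$-valued of finite rank, inheriting the $p$-valuation from $G$). By Lemma \ref{lem: isolators and saturations} applied within $G_1$, we have $G_2' = \Sat G_2 \cap G_1 = S_2 \cap G_1$, and since $[G_2':G_2] < \infty$ the universal property of $p$-saturation gives $\Sat G_2' = \Sat G_2 = S_2$. Because $G_2 \lhd G_1$, we also have $S_2 \lhd S_1$ by \cite[IV, 3.3.3]{lazard}, so Lemma \ref{lem: quotients and saturations} applied to $(G_1, S_1)$ with the $p$-saturated normal subgroup $S_2 \lhd S_1$ yields
$$ S_1/S_2 \cong \Sat(G_1/G_2'). $$

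Next, I would identify $G_1/G_2'$ explicitly. Since $[G_2':G_2]<\infty$, the finite group $G_2'/G_2$ consists of torsion elements of $G_1/G_2$. Conversely, if $xG_2\in G_1/G_2$ is torsion with $x^n\in G_2$, then $\langle x, G_2\rangle$ is a closed subgroup of $G_1$ (a finite union of $G_2$-cosets) containing $G_2$ with finite index, and normal in $G_1$ since $G_1/G_2$ is abelian; hence $G_1$-orbital, and so contained in $\mathrm{i}_{G_1}(G_2) = G_2'$. Thus $G_2'/G_2$ is exactly the torsion subgroup of $G_1/G_2$, and $G_1/G_2'$ is torsion-free abelian pro-$p$ of $\mathbb{Z}_p$-rank $d := \rank_{\mathbb{Z}_p}(G_1/G_2)$, i.e., $G_1/G_2' \cong \mathbb{Z}_p^d$.

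Finally, I would show that $\Sat(G_1/G_2')$ is abelian, torsion-free, and of rank $d$. Torsion-freeness is automatic since every $p$-saturation is $p$-valued; rank preservation follows from $[\Sat A:A]<\infty$ by \cite[IV, 3.4.1]{lazard} together with the invariance of $\mathbb{Z}_p$-rank under finite-index extensions of abelian pro-$p$ groups. The delicate point---and the main obstacle---is abelianness: the cleanest route is via Lazard's correspondence, since an abelian complete $p$-valued group $A$ has abelian $\mathbb{Z}_p$-Lie algebra $L(A)$, so $L(A)\otimes_{\mathbb{Z}_p}\mathbb{Q}_p$ is abelian, and $L(\Sat A)$ is a $\mathbb{Z}_p$-lattice inside this rational Lie algebra, hence abelian, whence $\Sat A$ is abelian. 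A direct alternative avoiding this machinery is a commutator calculation: for $x,y\in \Sat(G_1/G_2')$ we have $x^{p^n}, y^{p^n}\in G_1/G_2'$ commuting for some $n$, and iterating the standard $p$-valued-group estimate $\omega([a^p,b]) \geq \omega([a,b])+1$ forces $\omega([x,y])=\infty$, i.e., $[x,y]=1$.
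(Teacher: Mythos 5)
Your proof follows the same route as the paper's: both reduce via Lemma~\ref{lem: quotients and saturations} to showing that $\Sat\big(G_1/(S_2\cap G_1)\big)$ is torsion-free abelian of the right rank, after observing that $G_1/(S_2\cap G_1)$ is a finite-kernel quotient of $G_1/G_2$. You add welcome detail on the one point the paper leaves implicit, namely why $p$-saturation preserves abelianness; however, your claimed inequality $\omega([a^p,b]) \geq \omega([a,b])+1$ is not one of the $p$-valuation axioms and would itself require justification, so the cleaner elementary route is the uniqueness-of-$p$-th-roots trick already used in Lemma~\ref{lem: commutators and isolators}: from $[x^{p^n},y^{p^n}]=1$ one gets $(y^{x^{p^n}})^{p^n}=y^{p^n}$, hence $y^{x^{p^n}}=y$ by \cite[III, 2.1.4]{lazard}, and then $(x^y)^{p^n}=x^{p^n}$ gives $x^y=x$, i.e.\ $[x,y]=1$. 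With that substitution your argument is sound and matches the paper's.
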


\begin{proof}
As $S_2$ is $p$-saturated, $S_1/S_2$ is torsion-free, and so $$G_1 / (S_2\cap G_1) \cong G_1 S_2/S_2 \leq S_1/S_2$$ is torsion-free. $G_1/G_2$ maps onto $G_1 / (S_2\cap G_1)$ with finite kernel (by Lemma \ref{lem: isolators and saturations} and Proposition \ref{propn: orbital subgroups are open in their isolators}, and the assumption that $G$ has finite rank), so $G_1 / (S_2\cap G_1)$ is abelian of the same $\mathbb{Z}_p$-rank as $G_1/G_2$. By Lemma \ref{lem: quotients and saturations}, $S_1/S_2$ is the $p$-saturation of $G_1/(S_2\cap G_1)$, so is still abelian of the same $\mathbb{Z}_p$-rank.
\end{proof}

Before proving the main result of this section, we first need a technical lemma.

\begin{lem}\label{lem: commutators and isolators}
Let $G$ be a complete $p$-valued group of finite rank, and let $H$ and $N$ be two closed normal subgroups. Then
$$[\mathrm{i}_G(H), \mathrm{i}_G(N)] \leq \mathrm{i}_G(\overline{[H,N]}).$$
\end{lem}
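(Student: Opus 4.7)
The plan is to pass to the $p$-saturation $\Sat G$ and reduce the claim to a commutation statement there. By Lemma \ref{lem: isolators and saturations}, $\mathrm{i}_G(H) = \Sat H \cap G$, $\mathrm{i}_G(N) = \Sat N \cap G$, and $\mathrm{i}_G(\overline{[H,N]}) = \Sat \overline{[H,N]} \cap G$. Since $[\Sat H \cap G,\,\Sat N \cap G] \leq [\Sat H,\Sat N] \cap G$, it is enough to prove the sharper containment
$$[\Sat H,\,\Sat N] \;\leq\; \Sat\overline{[H,N]}$$
inside $\Sat G$, and then intersect with $G$.

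To prove this, write $M = \Sat\overline{[H,N]}$, a closed normal subgroup of $\Sat G$ by \cite[IV, 3.3.3]{lazard}, and pass to the quotient $Q = \Sat G / M$. By Lemma \ref{lem: quotients and saturations} (applied with $G$ replaced by $\Sat G$), $Q$ is itself $p$-saturated, so in particular it is $p$-valued and every element of $Q$ has a \emph{unique} $p$-th root. Writing $\bar{(-)}$ for images in $Q$, the relation $[H,N] \leq M$ gives $[\bar H,\bar N] = 1$. I want to promote this to $[\bar{\Sat H},\bar{\Sat N}] = 1$.

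The engine is the following elementary observation: in a $p$-saturated group $Q$, if $A$ and $C$ are subgroups with $[A,C] = 1$, and $b \in Q$ satisfies $b^{p^k} \in A$ for some $k \geq 0$, then $b$ centralises $C$. Indeed, for $c \in C$ the element $b' = cbc^{-1}$ has $(b')^{p^k} = c b^{p^k} c^{-1} = b^{p^k}$ because $b^{p^k} \in A$ commutes with $c$; applying uniqueness of $p^k$-th roots in $Q$ gives $b' = b$. Now $\Sat H / H$ is finite by \cite[IV, 3.4.1]{lazard}, and since $\Sat H$ is a complete $p$-valued group of finite rank it is pro-$p$, so $\Sat H / H$ is a finite $p$-group. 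Hence every $\bar x \in \bar{\Sat H}$ satisfies $\bar x^{p^k} \in \bar H$ for some $k$; applying the observation with $A = \bar H$, $C = \bar N$ yields $[\bar{\Sat H},\bar N] = 1$. Applying the observation a second time, now with $A = \bar N$, $C = \bar{\Sat H}$, and using that every element of $\bar{\Sat N}$ has some $p^k$-th power in $\bar N$, we obtain $[\bar{\Sat H},\bar{\Sat N}] = 1$, as required.

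The main obstacle is the inductive root-extraction step: one has to be sure that the quotient $Q$ really is $p$-saturated (so that $p$-th roots are unique), and that passing from $H$ to $\Sat H$ only adds elements whose $p$-power lands back in $H$. Both facts are already furnished by Lemma \ref{lem: quotients and saturations} and by \cite[IV, 3.4.1]{lazard}, together with the pro-$p$ nature of complete $p$-valued groups of finite rank, so once those pieces are in place the proof is essentially a two-step centraliser promotion.
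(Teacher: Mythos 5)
Your proof is correct, and it rests on the same engine as the paper's: uniqueness of $p^k$-th roots in a $p$-valued group (Lazard [III, 2.1.4]) promotes the commutation $[H,N]\leq M$ to a commutation between the larger groups, applied once for each side. The packaging is slightly different. The paper works directly with $\mathrm{i}_G$ inside $G$ and its torsion-free quotient $G/L$ where $L = \mathrm{i}_G(\overline{[H,N]})$, handling the base case $L = 1$ and the general case $L \neq 1$ separately, and for the latter verifying that $\pi(\mathrm{i}_G(H)) \leq \mathrm{i}_{\pi(G)}(\pi(H))$ for the quotient map $\pi$. You instead translate everything into $\Sat G$ via Lemma~\ref{lem: isolators and saturations}, pass to $Q = \Sat G/\Sat\overline{[H,N]}$ (which is $p$-saturated by Lemma~\ref{lem: quotients and saturations}, or directly by [III, 3.3.2.4]), and prove the sharper containment $[\Sat H,\Sat N] \leq \Sat\overline{[H,N]}$ before intersecting with $G$. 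This gives a uniform argument with no case split, trading the image-of-isolator check for the two saturation lemmas; the two routes are equivalent in substance. One cosmetic caveat: you write ``$\Sat H/H$ is a finite $p$-group,'' which tacitly treats $H$ as normal in $\Sat H$. What you actually need --- that every $x \in \Sat H$ has $x^{p^k} \in H$ for some $k$ --- is true because $\Sat H$ is a pro-$p$ group of finite rank and $H$ is open (pass to the normal core of $H$ in $\Sat H$), or simply because that is what membership in $\Sat H$ means by construction; the argument is fine, the phrasing is slightly loose.
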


\begin{proof}
Write $L := \mathrm{i}_G(\overline{[H,N]})$. This is normal in $G$, as $G$ is orbitally sound \cite[5.9]{ardakovInv}, and the quotient $G/L$ is still $p$-valued as it is torsion-free \cite[IV, 3.4.2]{lazard}.

Suppose first that $L = 1$, so that $[H,N]$ = 1. Then, for any $h\in \mathrm{i}_G(H)$, there is some integer $n$ such that $h^{p^n}\in H$, so that $[g, h^{p^n}] = 1$ for all $g\in N$. But this is the same as saying that $h^{p^n} = (h^g)^{p^n}$; and, as $G$ is $p$-valued, \cite[III, 2.1.4]{lazard} implies that $h = h^g$. As $g$ and $h$ were arbitrary, we see that $[\mathrm{i}_G(H), N] = 1$. Repeat this argument for $N$ to show that $[\mathrm{i}_G(H), \mathrm{i}_G(N)] = 1$.

If $L\neq 1$, we may pass to $G/L$. Write $\pi: G\to G/L$ for the natural surjection, so that
\begin{align}\label{eqn: 123}
\pi\big([\mathrm{i}_G(H), \mathrm{i}_G(N)]\big) = [\pi(\mathrm{i}_G(H)), \pi(\mathrm{i}_G(N))].
\end{align}
Now, $\pi(H)$ is a closed orbital subgroup of $\pi(G)$, and $\pi(\mathrm{i}_G(H))$ is a closed orbital subgroup of $\pi(G)$ containing $\pi(H)$ as an open subgroup, so that
\begin{align*}
\mathrm{i}_{\pi(G)}(\pi(H)) \geq \pi(\mathrm{i}_G(H)),
\end{align*}
and similarly for $N$. Together with (\ref{eqn: 123}), this implies that
\begin{align*}
\pi\big([\mathrm{i}_G(H), \mathrm{i}_G(N)]\big) \leq [\mathrm{i}_{\pi(G)}(\pi(H)),\mathrm{i}_{\pi(G)}(\pi(N))].
\end{align*}
But the right-hand side is now equal to $\pi(1)$, by the previous case, which shows that $[\mathrm{i}_G(H), \mathrm{i}_G(N)]\leq L$ as required.
\end{proof}

\begin{cor}\label{cor: N has a nice central series}
Let $G$ be a $p$-valuable group. Define two series:

\centerline{
\begin{tabular}{r l}
$G_i = \mathrm{i}_G(\overline{\gamma_i})$, where&
$\begin{cases}
\gamma_1 = G,\\
\gamma_{i+1} = [\gamma_i, G]& \mbox{for } i\geq 1;
\end{cases}$
\\
\\
$G^{(i)} = \mathrm{i}_G(\overline{\mathcal{D}_i})$, where&
$\begin{cases}
\mathcal{D}_0 = G,\\
\mathcal{D}_{i+1} = [\mathcal{D}_i, \mathcal{D}_i] & \mbox{for } i\geq 0,
\end{cases}$
\end{tabular}
}

where the bars denote topological closure inside $G$. If $G$ is nilpotent, then $(G_i)$ is a strongly central series for $G$, i.e. a central series in which $[G_i, G_j] \leq G_{i+j}$. If $G$ is soluble, then $(G^{(i)})$ is an abelian series for $G$. The quotients $G_i/G_{i+1}$ and $G^{(i)} / G^{(i+1)}$ are torsion-free, and hence $p$-valuable.
\end{cor}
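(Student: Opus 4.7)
The plan is to combine the classical inclusions for the abstract lower central series (resp.\ derived series) with Lemma \ref{lem: commutators and isolators}, exploiting the fact that the isolator is monotonic on closed normal subgroups of a $p$-valuable group. I would first record this monotonicity: if $H_1 \leq H_2$ are closed normal subgroups of $G$, then $\Sat H_1 \leq \Sat H_2$ inside $\Sat G$, so by Lemma \ref{lem: isolators and saturations},
\[\mathrm{i}_G(H_1) = \Sat H_1 \cap G \;\leq\; \Sat H_2 \cap G = \mathrm{i}_G(H_2).\]
This immediately shows $(G_i)$ and $(G^{(i)})$ are descending chains of closed normal subgroups of $G$, starting at $G_1 = \mathrm{i}_G(G) = G$ and $G^{(0)} = G$; when $G$ is nilpotent (resp.\ soluble) some $\gamma_r$ (resp.\ $\mathcal{D}_r$) is already trivial, so the corresponding series terminates at $1$.

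The central step is then the strongly central inclusion $[G_i, G_j] \leq G_{i+j}$ in the nilpotent case. Continuity of the commutator map gives $[\overline{\gamma_i}, \overline{\gamma_j}] \subseteq \overline{[\gamma_i, \gamma_j]}$, and the classical identity $[\gamma_i, \gamma_j] \leq \gamma_{i+j}$ then yields $\overline{[\overline{\gamma_i}, \overline{\gamma_j}]} \subseteq \overline{\gamma_{i+j}}$. Applying Lemma \ref{lem: commutators and isolators} to the closed normal subgroups $H = \overline{\gamma_i}$ and $N = \overline{\gamma_j}$, followed by the monotonicity above, I would conclude
\[ [G_i, G_j] \;\leq\; \mathrm{i}_G\!\left(\overline{[\overline{\gamma_i}, \overline{\gamma_j}]}\right) \;\leq\; \mathrm{i}_G(\overline{\gamma_{i+j}}) \;=\; G_{i+j}. \]
An identical argument, using $[\mathcal{D}_i, \mathcal{D}_i] = \mathcal{D}_{i+1}$ in place of $[\gamma_i, \gamma_j] \leq \gamma_{i+j}$, then gives the abelian inclusion $[G^{(i)}, G^{(i)}] \leq G^{(i+1)}$ in the soluble case.

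Finally, I would show that each quotient $G_i/G_{i+1}$ (resp.\ $G^{(i)}/G^{(i+1)}$) is torsion-free, and hence $p$-valuable. By Lemma \ref{lem: properties of isolator}(i), $G_{i+1}$ is an isolated orbital closed normal subgroup of $G$, and Lemma \ref{lem: isolators and saturations} identifies it with $\Sat \overline{\gamma_{i+1}} \cap G$ inside $\Sat G$. As in the proof of that lemma, $G/G_{i+1}$ embeds in the torsion-free group $\Sat G/\Sat\overline{\gamma_{i+1}}$, so $G_i/G_{i+1}$ is torsion-free; being a closed quotient of the $p$-valued group $G_i$, it is then $p$-valuable by \cite[IV, 3.4.2]{lazard}. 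The argument for $G^{(i)}/G^{(i+1)}$ is identical.

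The only real obstacle I anticipate is the usual bookkeeping: ensuring that closures appear in the right places so that Lemma \ref{lem: commutators and isolators} applies to genuine closed normal subgroups rather than to the abstract $\gamma_i$ or $\mathcal{D}_i$ directly. Once the closure--commutator--isolator chain is set up correctly, everything else reduces to routine manipulation of three classical ingredients: continuity of commutators, the standard inclusions for $(\gamma_i)$ and $(\mathcal{D}_i)$, and the behaviour of $\Sat$ on closed normal subgroups.
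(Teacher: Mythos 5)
Your argument is correct and rests on the same key lemma as the paper (Lemma \ref{lem: commutators and isolators}, applied after taking closures and invoking the classical commutator inclusions for $(\gamma_i)$ and $(\mathcal{D}_i)$), but it streamlines the overall logic. The paper proves the conclusion in layers: first that $(G_i)$ is an abelian series (via Lemma \ref{lem: abelian quotients and saturations}), then that it is a central series (via \cite[Lemma 8.4(a)]{ardakovInv}, the fact that the centre of a $p$-valued group is isolated), and only at the end the strongly central inclusion. You notice that the strongly central inclusion $[G_i, G_j]\leq G_{i+j}$, taken with $j=1$ and $G_1=G$, already gives the central (hence abelian) series condition for free, and that the identical isolator--commutator computation applied with $H=N=\overline{\mathcal{D}_i}$ handles the derived series. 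This makes the Ardakov input and Lemma \ref{lem: abelian quotients and saturations} unnecessary for this corollary. The extra ingredient you make explicit --- monotonicity of $\mathrm{i}_G$ on closed normal subgroups --- is used implicitly in the paper's last displayed inequality; your derivation via $\Sat$ and Lemma \ref{lem: isolators and saturations} is fine, and it can also be seen directly: for closed normal $H_1\leq H_2$, the image of $\mathrm{i}_G(H_1)$ in the torsion-free quotient $G/\mathrm{i}_G(H_2)$ is a finite normal subgroup, hence trivial.
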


\begin{rk}
We prove this using $p$-saturations, but the resulting closed subgroups $G_i$ and $G^{(i)}$ are independent of the choice of $p$-valuation $\omega$ on $G$.

The series $(G^{(i)})$ above is a generalisation of the series studied in \cite[proof of lemma 2.2.1]{nelson}, there called $(G_i)$.
\end{rk}

\begin{proof}
Fix a $p$-valuation $\omega$ on $G$ throughout.

Firstly, we will show that $(G_i)$ is an abelian series. The claim that $(G^{(i)})$ is an abelian series will follow by an identical argument.

The (abstract) lower central series $(\gamma_i)$ is an abelian series for $G$ as an abstract group (i.e. the subgroups $\gamma_i$ are not necessarily closed in $G$), and so the series $(\overline{\gamma_i})$ is a series of \emph{closed} normal subgroups of $G$, which is still an abelian series by continuity. Now, applying Lemma \ref{lem: abelian quotients and saturations} shows that $(\Sat \overline{\gamma_i})$ is also an abelian series; and by Lemma \ref{lem: isolators and saturations}, we see that $G_i = \Sat \overline{\gamma_i} \cap G$ for each $i$, so that $(G_i)$ is an abelian series.

Secondly, we address the claim that the quotients $G_i/G_{i+1}$ are torsion-free and hence $p$-valuable: this follows from \cite[III, 3.1.7.6 / IV, 3.4.2]{lazard}, as the $G_{i+1}$ are isolated in $G$. The case of the quotients $G^{(i)}/G^{(i+1)}$ is again identical.

Thirdly, we must show that $G_{i-1}/G_i$ is central in $G/G_i$. Certainly $\gamma_{i-1}G_i/G_i$ is central in $G/G_i$, because $\gamma_i \leq G_i$, and so $$\overline{\gamma_{i-1}}G_i/G_i \leq Z(G/G_i)$$ by continuity. However, \cite[lemma 8.4(a)]{ardakovInv} says that $Z(G/G_i)$ is isolated in $G/G_i$, so by taking $(G/G_i)$-isolators of both sides, we must have $$\displaystyle \mathrm{i}_{G/G_i}\big(\overline{\gamma_{i-1}}G_i/G_i\big) \leq Z(G/G_i);$$ and the left-hand side is clearly equal to $G_{i-1}/G_i$ by Lemma \ref{lem: properties of orbitally sound groups}(i) and Definition \ref{defn: isolator}.

Finally, note that
$$[\gamma_i, \gamma_j] \leq \gamma_{i+j}$$
by \cite[5.1.11(i)]{rob}, and so by taking closures,
$$\overline{[\gamma_i, \gamma_j]} \leq \overline{\gamma_{i+j}}.$$
But $[\overline{\gamma_i}, \overline{\gamma_j}] \leq \overline{[\gamma_i, \gamma_j]}$, as the function $G\times G\to G$ given by $(a, b) \mapsto [a,b]$ is continuous. Hence
$$[\overline{\gamma_i}, \overline{\gamma_j}]  \leq \overline{\gamma_{i+j}},$$
which implies
$$\overline{[\overline{\gamma_i}, \overline{\gamma_j}]}  \leq \overline{\gamma_{i+j}},$$
and so, by Lemma \ref{lem: commutators and isolators}, we may take isolators to show that
$$[\mathrm{i}_G(\overline{\gamma_i}), \mathrm{i}_G(\overline{\gamma_j})] \leq \mathrm{i}_G\Big(\overline{[\overline{\gamma_i}, \overline{\gamma_j}]}\Big) \leq \mathrm{i}_G(\overline{\gamma_{i+j}}),$$
i.e. $[G_i, G_j] \leq G_{i+j}.$
\end{proof}

\begin{defn}\label{defn: ilcs}
When $G$ is a nilpotent (resp. soluble) $p$-valued group of finite rank, the series $(G_i)$ (resp. $(G^{(i)})$) defined in Corollary \ref{cor: N has a nice central series} is the \emph{isolated lower central series} (resp. \emph{isolated derived series}) of $G$.
\end{defn}

\textit{Proof of Theorem B.} This is the content of Corollary \ref{cor: N has a nice central series}.\qed

\newpage
\section{Conjugation action of $G$}

In this subsection, we will study how nilpotent-by-finite compact $p$-adic analytic groups $G$ act by conjugation on certain torsion-free abelian and nilpotent subquotients. First, we slightly extend the term ``orbitally sound".

\begin{defn}
Let $G$ and $H$ be profinite groups, and suppose $G$ acts (continuously) on $H$. Then $G$ permutes the closed subgroups of $H$. We say that the action of $G$ on $H$ is \emph{orbitally sound} if, for any closed subgroup $K$ of $H$ with finite $G$-orbit, there exists an open subgroup $K'$ of $K$ which is normalised by $G$.
\end{defn}

Recall the group of torsion units of $\mathbb{Z}_p$: $$t(\mathbb{Z}_p^\times) = \begin{cases}
\{\pm 1\} & p=2\\
\mathbb{F}_p^\times & p>2.
\end{cases}$$

\begin{lem}\label{lem: F acts by scalars}
Let $A$ be a free abelian pro-$p$ group of finite rank. Let $G$ be a profinite group acting orbitally soundly and by automorphisms of finite order on $M$. Then, for each $g\in G$, there exists $$\zeta = \zeta_g \in t(\mathbb{Z}_p^\times)$$ such that $g\cdot x = \zeta x$ for all $x\in A$. This is multiplicative in $G$, in the sense that $\zeta_g \zeta_h = \zeta_{gh}$ for all $g,h\in G$.
\end{lem}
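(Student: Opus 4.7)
The plan is to first reduce to the case where the image of $G$ in $\Aut(A) \cong GL_d(\mathbb{Z}_p)$ (with $d = \rank A$) is finite, then apply orbital soundness to procyclic subgroups of $A$, and finish with a short linearity argument to eliminate the dependence on the chosen vector.

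For the reduction, note that the image of $G$ in $GL_d(\mathbb{Z}_p)$ is a closed subgroup consisting entirely of finite-order elements. The first (respectively second, for $p=2$) congruence subgroup of $GL_d(\mathbb{Z}_p)$ is uniform by \cite[Theorem 5.2]{DDMS}, and hence torsion-free, so this image meets it trivially and embeds into a finite quotient of $GL_d(\mathbb{Z}_p)$. Consequently every closed subgroup of $A$ has finite $G$-orbit, which means the orbital soundness hypothesis is applicable to any closed subgroup I choose.

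Now fix $g \in G$ and a non-zero $x \in A$, and set $K_x = \overline{\langle x \rangle} \cong \mathbb{Z}_p$. Orbital soundness produces a $G$-invariant open subgroup $K'_x \leq K_x$, necessarily of the form $p^m K_x$ for some $m$. Invariance gives $p^m (g \cdot x) = g \cdot (p^m x) \in p^m K_x$, and torsion-freeness of $A$ then forces $g \cdot x \in K_x$, so $g \cdot x = \zeta_{g,x} x$ for a unique $\zeta_{g,x} \in \mathbb{Z}_p^\times$; since $g$ has finite order, $\zeta_{g,x}$ is a torsion unit and hence lies in $t(\mathbb{Z}_p^\times)$.

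To show $\zeta_{g,x}$ does not depend on $x$, I would use linearity: for $x, y \in A$ which are linearly independent over $\mathbb{Q}_p$, expanding $g\cdot(x+y) = g\cdot x + g\cdot y$ yields
$$(\zeta_{g, x+y} - \zeta_{g, x})\, x + (\zeta_{g, x+y} - \zeta_{g, y})\, y = 0,$$
forcing $\zeta_{g, x} = \zeta_{g, y}$. Any two non-zero elements of $A$ either are linearly independent or (if $\rank A \geq 2$) can be compared through a third vector independent from each; the rank-one case is immediate. Hence $\zeta_g := \zeta_{g, x}$ is well-defined, and multiplicativity is automatic: $\zeta_{gh} x = (gh)\cdot x = g\cdot(\zeta_h x) = \zeta_g \zeta_h x$. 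The main obstacle is really the initial finiteness reduction; once every rank-one subgroup is known to be $G$-stable, the rest is formal.
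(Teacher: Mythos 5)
Your proof is correct and follows essentially the same strategy as the paper: use orbital soundness applied to the rank-one closed subgroups $\overline{\langle x\rangle}$ to show that every nonzero vector is an eigenvector, then observe that a linear map whose every vector is an eigenvector is scalar, and finally invoke finite order to land in $t(\mathbb{Z}_p^\times)$. The one genuine addition you make is the opening reduction -- showing via the torsion-free congruence subgroup that the image of $G$ in $\Aut(A)$ is actually finite, so that every closed subgroup of $A$ has finite $G$-orbit and the orbital soundness hypothesis is legitimately applicable to each $\overline{\langle x\rangle}$; the paper uses this implicitly without comment, so your care here is a worthwhile clarification rather than a divergence.
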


\begin{proof}
Write $\varphi$ for the automorphism of $A$ given by conjugation by $g$. We may view $\varphi$ as an automorphism of the $\mathbb{Q}_p$-vector space $A_{\mathbb{Q}_p} := A\tensor{\mathbb{Z}_p} \mathbb{Q}_p$.

As the action of $G$ on $A$ is orbitally sound, in particular, we have $$\langle x\rangle \cap \langle \varphi(x) \rangle \neq \{0\}$$ (as $\mathbb{Q}_p$-vector subspaces) for every $x\in A_{\mathbb{Q}_p}$. But this just means that $x$ is an eigenvector of the linear map $\varphi$. If all elements of $A_{\mathbb{Q}_p}$ are eigenvectors of $\varphi$, then they must have a common eigenvalue, say $\zeta$. The statement that $G$ acts on $A$ by automorphisms of finite order means that the eigenvalue $\zeta$ for $x$ is of finite order, $\zeta\in t(\mathbb{Z}_p^\times)$.

Multiplicativity is clear from the fact that $(gh)\cdot x = g\cdot (h\cdot x)$ for all $g,h\in G$.
\end{proof}

\begin{rk}
Assume that $G$ is a nilpotent-by-finite, orbitally sound compact $p$-adic analytic group. In the case when $H$ is an open subgroup of $G$ containing $\Delta^+$, with the property that $N := H/\Delta^+$ is nilpotent $p$-valuable, we may consider the isolated lower central series of Corollary \ref{cor: N has a nice central series} for $N$:
$$N = N_1 \rhd N_2 \rhd \dots \rhd N_r = 1,$$
and take their preimages in $G$ to get a series of characteristic subgroups of $H$:
$$H = H_1 \rhd H_2 \rhd \dots \rhd H_r = \Delta^+,$$
with the property that each $A_i := H_i/H_{i+1}$ is a free abelian pro-$p$ group of finite rank.

$G$ clearly acts orbitally soundly on each $A_i$, as $G$ is itself orbitally sound. Furthermore, as $[H, H_i] \leq H_{i+1}$ for each $i$, we see that the action $G\to \Aut(A_i)$ contains the open subgroup $H$ in its kernel, and so $G$ acts by automorphisms of finite order. Thus we may apply Lemma \ref{lem: F acts by scalars} to see that $G$ acts on each $A_i$ via a homomorphism $\xi_i: G\to t(\mathbb{Z}_p^\times)$.

That is, given any $g\in G$ and $h\in H_i$, and writing $\zeta = \xi_i(g)$ and $a = nN_{i+1}\in A_i$, we have
$$(n^g)n^{-\zeta}\in N_{i+1},$$
or equivalently (still in multiplicative notation)
$$a^g = a^\zeta.$$
\end{rk}

We now show that the action of an automorphism of $G$ on the quotients $A_i$ is strongly controlled by its action on $A_1$. This is an important property that the isolated lower central series shares with the usual lower central series of abstract nilpotent groups; cf. \cite[5.2.5]{rob} and the surrounding discussion.

\begin{lem}\label{lem: zetas are not all 1}
Let $H$ be a finite-by-(nilpotent $p$-valuable) group, and continue to write $A_i := H_i/H_{i+1}$ as in the remark above. Let $\alpha$ be an automorphism of $H$ inducing multiplication by $\zeta_i\in t(\mathbb{Z}_p^\times)$ on each $A_i$. Then $\zeta_i = \zeta_1^i$ for each $i$.
\end{lem}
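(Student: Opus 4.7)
The plan is to use the strong centrality $[H_i, H_j] \le H_{i+j}$ of the isolated lower central series (Corollary \ref{cor: N has a nice central series}) to construct $\mathbb{Z}_p$-bilinear commutator pairings between the graded pieces $A_i$, and then to exploit the fact that $\alpha$, being a group homomorphism, respects commutators.

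First, observe that $\alpha$ preserves each $H_i$: $\Delta^+$ is characteristic in $H$ by Lemma \ref{lem: Delta and Delta+}(iv), each $N_j$ is characteristic in $N = H/\Delta^+$ by Theorem B, and $H_i$ is the preimage of $N_i$. So $\alpha$ genuinely acts on each $A_i$, as in the hypothesis. The strong centrality lets us define a $\mathbb{Z}_p$-bilinear pairing
$$c_i \colon A_1 \otimes_{\mathbb{Z}_p} A_{i-1} \longrightarrow A_i, \qquad \bar x \otimes \bar y \longmapsto [x,y]\, H_{i+1}.$$
Well-definedness and $\mathbb{Z}$-bilinearity modulo $H_{i+1}$ are routine from the identities $[ab,c] = [a,c]^b [b,c]$ and $[a,bc] = [a,c][a,b]^c$, combined with $[H_1, H_i],\, [H_2, H_{i-1}],\, [H_i, H_1] \subseteq H_{i+1}$; $\mathbb{Z}_p$-bilinearity then follows by continuity.

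Because $\alpha$ is a group automorphism we have $\alpha([x,y]) = [\alpha(x), \alpha(y)]$, and this descends to
$$\zeta_i \cdot c_i(\bar x \otimes \bar y) \;=\; c_i(\zeta_1 \bar x \otimes \zeta_{i-1} \bar y) \;=\; \zeta_1 \zeta_{i-1} \cdot c_i(\bar x \otimes \bar y)$$
for every $\bar x \in A_1,\; \bar y \in A_{i-1}$. Since $A_i$ is a free (hence torsion-free) $\mathbb{Z}_p$-module, this forces $\zeta_i = \zeta_1 \zeta_{i-1}$ as soon as $c_i$ has non-zero image.

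To pin down that $c_i$ is non-zero (in fact of finite-index image), I would use the classical identity $\gamma_i(N) = [N, \gamma_{i-1}(N)]$ for the abstract lower central series of $N$: this immediately places the image of $\overline{\gamma_i(N)}$ in $A_i = N_i/N_{i+1}$ inside the image of $c_i$. But $N_i = \mathrm{i}_N(\overline{\gamma_i(N)})$ contains $\overline{\gamma_i(N)}$ as an open subgroup by Proposition \ref{propn: orbital subgroups are open in their isolators}, so the image of $\overline{\gamma_i(N)}$ in $A_i$ has finite index; hence so does the image of $c_i$, which is therefore non-zero whenever $A_i \neq 0$. Combined with the trivial base case $\zeta_1 = \zeta_1^1$, induction on $i$ yields $\zeta_i = \zeta_1^i$. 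The only real obstacle is the bookkeeping needed to set up $c_i$ and to verify that its image spans $A_i$ rationally; both are standard once the strong centrality from Theorem B is in hand.
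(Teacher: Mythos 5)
Your proof is correct and follows essentially the same route as the paper's: both define the $\alpha$-equivariant commutator pairing $A_1 \otimes_{\mathbb{Z}_p} A_{i-1} \to A_i$ coming from strong centrality, note that its image is open (and hence nonzero) in the torsion-free module $A_i$, and conclude $\zeta_i = \zeta_1\zeta_{i-1}$ by induction. You give a bit more explicit justification for why the image is open (via $\gamma_i = [N,\gamma_{i-1}]$ and Proposition \ref{propn: orbital subgroups are open in their isolators}), which the paper dispatches with the phrase ``by definition of the isolated lower central series.''
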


\begin{proof}
The map
\begin{align*}
A_1 \tensor{\mathbb{Z}_p} A_i &\to A_{i+1}\\
xH_2 \otimes yH_{i+1} &\mapsto [x,y]H_{i+2}
\end{align*}
is a $\mathbb{Z}_p\langle \alpha\rangle$-module homomorphism, and its image is open in $A_{i+1}$ (by definition of the isolated lower central series). Write $\zeta_1 = \zeta$, and proceed by induction on $i$: suppose that $\zeta_i = \zeta^i$. Now, for any positive integers $a$ and $b$, we have
\begin{align*}
[x^a, y^b]H_{i+2} = [x,y]^{ab} H_{i+2}
\end{align*}
by \cite[0.2(i), (ii)]{DDMS} and by using the fact that $[x,y]H_{i+2}$ is central in $H/H_{i+2}$. Hence, by continuity, this is true for any $a,b \in \mathbb{Z}_p$, and so
\begin{align*}
\alpha([x,y]H_{i+2}) & = [x^\zeta, y^{\zeta^i}]H_{i+2} \\
& = [x,y]^{\zeta^{i+1}}H_{i+2}.\qedhere
\end{align*}
\end{proof}

We deduce:

\begin{cor}\label{cor: defn of xi_1}
Let $G$ be a nilpotent-by-finite, orbitally sound compact $p$-adic analytic group, and $H$ an open normal subgroup of $G$ containing $\Delta^+$ such that $H/\Delta^+$ is nilpotent $p$-valuable. Then the conjugation action of $G$ on $H$ induces an action of $G$ on $H/H_2$ given by the map $\xi_1: G\to t(\mathbb{Z}_p^\times) \leq \Aut(H/H_2)$ defined above. Moreover, $H\leq \ker \xi_1$.\qed
\end{cor}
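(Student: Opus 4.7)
\textit{Proof proposal.} The statement has two parts: first, that the conjugation action of $G$ on $H/H_2$ really is described by a map $\xi_1: G \to t(\mathbb{Z}_p^\times)$, and second, that $H \leq \ker\xi_1$. The plan is to verify the hypotheses of Lemma \ref{lem: F acts by scalars} for the $G$-module $A_1 := H/H_2$, and then to read off the kernel statement from the fact that $H/H_2$ is abelian.

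For the first part, I would check Lemma \ref{lem: F acts by scalars}'s three hypotheses on $A_1$ in turn. (a) $A_1$ is free abelian pro-$p$ of finite rank: this is exactly the content of the remark preceding Lemma \ref{lem: zetas are not all 1}, since $H_1 = H$ and $H_2$ is the preimage in $G$ of the second term of the isolated lower central series of $N = H/\Delta^+$, which is $p$-valuable with $A_i$ free abelian of finite rank by Corollary \ref{cor: N has a nice central series} and Lemma \ref{lem: abelian quotients and saturations}. (b) $G$ acts by automorphisms of finite order: since $H_2$ is characteristic in $H$ (as the preimage of a characteristic subgroup of $N$) and $H$ is normal in $G$, conjugation gives a continuous action $G \to \Aut(A_1)$; this factors through $G/H$, which is finite by hypothesis (as $H$ is open in $G$), so the image consists of automorphisms of finite order. (c) The action is orbitally sound: given a $G$-orbital closed subgroup $K \leq A_1$, its preimage $\widetilde K$ in $H$ is a closed subgroup containing $H_2$ with $\mathbf{N}_G(\widetilde K)/H_2 = \mathbf{N}_{G/H_2}(K)$ open in $G/H_2$, hence $\widetilde K$ is $G$-orbital. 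Orbital soundness of $G$, in the form of Lemma \ref{lem: defns of orbitally sound are equivalent}(ii), then produces an open subgroup of $\widetilde K$ that is normal in $G$; projecting back to $A_1$ gives the required open $G$-normal subgroup of $K$. Lemma \ref{lem: F acts by scalars} now yields the homomorphism $\xi_1: G \to t(\mathbb{Z}_p^\times)$.

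The second part, $H \leq \ker \xi_1$, is immediate: $H$ acts trivially on $A_1 = H/H_2$ because $[H,H] \leq H_2$ (the successive quotients of the isolated lower central series are abelian by Corollary \ref{cor: N has a nice central series}, and this descends to the preimages $H_i$), so $\xi_1(h) = 1$ for every $h \in H$.

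There is no real obstacle; this is essentially a bookkeeping corollary whose substantive content has already been absorbed into Lemma \ref{lem: F acts by scalars}, Corollary \ref{cor: N has a nice central series}, and the remark naming the $A_i$. The only mild care needed is to confirm that properties of the isolated lower central series of $N$ (orbital soundness, characteristicness, abelianness of successive quotients) transfer to the preimage series $H = H_1 \rhd H_2 \rhd \dots \rhd H_r = \Delta^+$ in $G$, which is routine because $\Delta^+$ is characteristic in $G$.
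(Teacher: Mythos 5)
Your proposal is correct and follows the same route as the paper: the corollary is marked $\qed$ because its proof is absorbed into the remark preceding Lemma \ref{lem: zetas are not all 1}, where the hypotheses of Lemma \ref{lem: F acts by scalars} are verified for each $A_i$ and the inclusion $H \leq \ker \xi_i$ is read off from $[H, H_i] \leq H_{i+1}$ exactly as you do. The only difference is that you spell out the ``clearly acts orbitally soundly'' step by passing to preimages in $G$ and invoking Lemma \ref{lem: defns of orbitally sound are equivalent}(ii), which the paper leaves implicit; this is a correct and slightly more explicit rendering of the same argument.
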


\begin{rk}
If $N = H/\Delta^+$ is $p$-saturable, we may take its corresponding Lie algebra $L$ by Lazard's isomorphism of categories \cite[IV, 3.2.6]{lazard}. As in \cite[proof of lemma 8.5]{ardakovInv}: using \cite[lemma 4.2]{ardakovInv} and the fact that the $N/N_i$ are torsion-free, we can pick an \emph{ordered basis} \cite[III, 2.2.4]{lazard} $B$ for $N$ which is \emph{filtered} relative to the filtration on $N$: that is,
\begin{align*}
B = \{n_1, n_2, \dots, n_e\},
\end{align*}
and there exists a filtration of sets
\begin{align*}
B = B_1 \supset B_2 \supset \dots \supset B_{r-1} \neq \emptyset
\end{align*}
such that $B_i$ is an ordered basis for $N_i$ for each $1\leq i\leq r-1$. We may order the elements so that, for some integers $1 = k_1 < k_2 < \dots < k_{r-1} < e$, we have $B_i = \{n_{k_i+1}, \dots, n_e\}$ for each $1\leq i\leq r-1$. Taking logarithms of these basis elements gives us a basis for $L$, and then Lemma \ref{lem: zetas are not all 1} implies that, with respect to this basis, the automorphism of $L$ induced by $\alpha$ has the special block lower triangular form
\begin{align*}
\begin{pmatrix}
\zeta I_{d_1} & 0 & 0 & \dots & 0 \\
* & \zeta^2 I_{d_2} & 0 & \dots & 0 \\
* & * & \zeta^3 I_{d_3} & \dots & 0 \\
\vdots & \vdots & \vdots & \ddots & \vdots \\
* & * & * & \dots & \zeta^{r-1} I_{d_{r-1}}
\end{pmatrix},
\end{align*}
where $d_i = \mathrm{rk}(L_i/L_{i+1}) = \mathrm{rk}(M_i)$ and $I$ denotes the identity matrix.
\end{rk}

\newpage
\section{The finite-by-(nilpotent $p$-valuable) radical}

Let $G$ be a nilpotent-by-finite compact $p$-adic analytic group. Consider the set
$$\mathcal{S}(G) = \left\{ H\nopen G \middle| H/\Delta^+(H) \mbox{ is nilpotent and } p \mbox{-valuable}\right\},$$
where ``$H\nopen G$" means ``$H$ is an open normal subgroup of $G$". $\mathcal{S}(G)$ is nonempty, as we can pick an open normal nilpotent uniform subgroup of $G$ by \cite[4.1]{DDMS}, and hence contains a maximal element. We will show that this maximal element is \emph{unique}, and we will call this element the \emph{finite-by-(nilpotent $p$-valuable) radical} of $G$, and once we have shown its uniqueness we will denote it by $\fn(G)$.

\begin{rk}
Once we have shown the existence and uniqueness of $\fn(G)$, it will be clear that it is a \emph{characteristic} open subgroup of $G$ (as automorphisms of $G$ leave $\mathcal{S}(G)$ invariant), and contained in $\nio(G)$ (by Corollary \ref{cor: f-by-n implies o.s.} and Theorem A).

The quotient group $$\nio(G)/\fn(G)$$ is isomorphic to a subgroup of $t(\mathbb{Z}_p^\times)$ by Corollary \ref{cor: defn of xi_1}. When $p > 2$, $t(\mathbb{Z}_p^\times)$ is a $p'$-group, and so $\fn(G)/\Delta^+$ is the unique Sylow pro-$p$ subgroup of $\nio(G)/\Delta^+$. (This fails for $p=2$: the ``2-adic dihedral group" $G = \mathbb{Z}_2 \rtimes C_2$ has $\Delta^+(G) = 1$, $\nio(G) = G$, and is its own Sylow 2-subgroup, but $\fn(G) = \mathbb{Z}_2$.)
\end{rk}

In looking for maximal elements $H$ of $\mathcal{S}(G)$, we may make an immediate simplification. By maximality, any such $H$ must have $\Delta^+(H) = \Delta^+$, i.e. maximal elements of $\mathcal{S}(G)$ are in one-to-one correspondence with maximal elements of 
$$\mathcal{S}'(G) = \left\{ H\nopen G \middle| \Delta^+\leq H,\, H/\Delta^+ \mbox{ is nilpotent and } p \mbox{-valuable}\right\},$$
and this set is clearly in order-preserving one-to-one correspondence with the set $\mathcal{S}(G/\Delta^+)$. Hence we may immediately assume without loss of generality that $\Delta^+ = 1$.

\begin{lem}\label{lem: there exists a large N}
Let $G$ be a nilpotent-by-finite compact $p$-adic analytic group with $\Delta^+ = 1$. Then
\begin{itemize}
\item[(i)] there exists a nilpotent uniform open normal subgroup $H$ of $G$ which contains $\Delta$,
\item[(ii)] any such $H$ satisfies the property that $Z(H) = \Delta$.
\end{itemize}
\end{lem}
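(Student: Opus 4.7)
My plan for part (i) is to form $H$ as the product $\Delta \cdot U_n$, where $U$ is a nilpotent uniform open normal subgroup of $G$ (which exists by \cite[4.1]{DDMS}, as remarked at the start of this section) and $U_n := P_n(U)$ is the $n$-th term of its lower $p$-series, chosen for $n$ large. Since $\Delta^+ = 1$, Lemma \ref{lem: Delta and Delta+}(v) gives $\Delta$ torsion-free abelian, hence (as a compact $p$-adic analytic group) topologically finitely generated. Each topological generator is orbital, so $C_G(\Delta)$ is an intersection of finitely many open subgroups, hence open in $G$. Since $\bigcap_n U_n = 1$ and the $U_n$ are uniform characteristic subgroups of $U$ (hence $G$-normal), I can take $n$ large enough that $U_n \leq C_G(\Delta)$, which forces $[\Delta, U_n] = 1$; set $H = \Delta U_n$.

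The bulk of the work in (i) is to verify that this $H$ is indeed nilpotent, uniform, open and normal in $G$. Normality, openness and nilpotence (the latter via Fitting's theorem) are immediate. For uniformity, I would verify the three defining conditions of \cite[Theorem 4.5]{DDMS}: $H$ is pro-$p$ because $H/U_n \cong \Delta/(\Delta \cap U_n)$ is a finite $p$-group (noting that $\Delta$ is pro-$p$, being torsion-free abelian compact $p$-adic analytic); $H$ is powerful because $[\Delta, U_n] = [\Delta, \Delta] = 1$ reduces $[H,H]$ to $[U_n, U_n] \leq U_n^p \leq H^p$ (for $p>2$; analogously for $p=2$); and $H$ is torsion-free because $[\Delta, U_n] = 1$ forces $\Delta \leq Z(H)$, so $H/\Delta \cong U_n/(\Delta \cap U_n)$, and the intersection $\Delta \cap U_n$ coincides with $Z(U_n)$ (the inclusion $\Delta \cap U_n \leq Z(U_n)$ uses the argument in the proof of Lemma \ref{lem: Delta and Delta+}(iii), which identifies $\Delta(U_n) = Z(U_n)$ for the $p$-valued group $U_n$; conversely any $z \in Z(U_n)$ has the open subgroup $U_n$ inside $C_G(z)$, so lies in $\Delta$); by \cite[Lemma 8.4(a)]{ardakovInv}, $Z(U_n)$ is isolated orbital in $U_n$, so $U_n/Z(U_n)$ is torsion-free, and combined with torsion-freeness of $\Delta$ this gives $H$ torsion-free.

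For part (ii), any such $H$ is uniform and therefore $p$-valued, and the argument given in the proof of Lemma \ref{lem: Delta and Delta+}(iii) shows $\Delta(K) = Z(K)$ for any $p$-valued group $K$. The inclusion $\Delta \leq Z(H)$ follows because each $d \in \Delta$ has $C_G(d)$ open, hence $C_H(d)$ open in $H$, placing $d$ in $\Delta(H) = Z(H)$; conversely $Z(H) \leq \Delta$ because any $z \in Z(H)$ is centralized by the open subgroup $H$, so $[G:C_G(z)] \leq [G:H] < \infty$ and $z \in \Delta(G) = \Delta$.

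The main technical delicacy is part (i): arranging that $H$ be simultaneously powerful and torsion-free while still containing all of $\Delta$. Both properties reduce, via the choice of $n$, to the openness of $C_G(\Delta)$---which itself rests on $\Delta$ being topologically finitely generated with orbital elements---allowing $U_n$ to be taken deep enough to centralize $\Delta$ entirely; the torsion-free check additionally exploits the isolated-orbital property of $Z(U_n)$ inside $U_n$ supplied by \cite[Lemma 8.4(a)]{ardakovInv}.
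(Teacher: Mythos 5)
Your proof is correct, but the route you take for part (i) is noticeably more involved than the paper's. You first establish that $\mathbf{C}_G(\Delta)$ is open (via topological finite generation of $\Delta$ and openness of the centraliser of each generator), and then descend to a term $U_n$ of the lower $p$-series deep enough that $U_n \leq \mathbf{C}_G(\Delta)$. The paper sidesteps this entirely: for \emph{any} nilpotent uniform open normal $N \leq G$, any $x\in\Delta$ and any $h\in N$, the openness of $\mathbf{C}_G(x)$ gives $h^{p^k}\in\mathbf{C}_G(x)$ for some $k$, hence $(x^{-1}hx)^{p^k} = h^{p^k}$ in $N$, and uniqueness of $p^k$-th roots in a $p$-valued group \cite[III, 2.1.4]{lazard} forces $x^{-1}hx = h$. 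So $[\Delta, N] = 1$ already holds for the initial uniform subgroup $N$, and one can set $H = N\Delta$ with no descent to $U_n$ needed. Had you spotted this you could have skipped the $\mathbf{C}_G(\Delta)$-openness argument and the lower $p$-series machinery altogether. Your torsion-freeness check is also organised differently --- via the isomorphism $H/\Delta \cong U_n/Z(U_n)$ and the isolated-orbitality of $Z(U_n)$ --- whereas the paper uses the elementary nilpotent-group fact that a nontrivial normal torsion subgroup must meet the (here torsion-free) centre nontrivially, which is a little shorter; both are sound. One point in your favour: you explicitly verify that $H$ is pro-$p$ (via $\Delta$ being pro-$p$, which does require the small argument that a torsion-free abelian compact $p$-adic analytic group has no prime-to-$p$ finite quotient), whereas the paper leaves this implicit before invoking \cite[4.5]{DDMS}. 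Your part (ii) is essentially identical to the paper's.
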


\begin{proof}
First, suppose we are given a nilpotent uniform open normal subgroup $H$. Take $x\in \Delta$: $\mathbf{C}_G(x)$ is open in $G$ by definition, and so 
$$\mathbf{C}_H(x) = \mathbf{C}_G(x) \cap H$$
is open in $H$. Therefore, for any $h\in H$, we can find some integer $k$ such that $h^{p^k} \in \mathbf{C}_H(x)$. This means that $(x^{-1}hx)^{p^k} = h^{p^k}$, and so by \cite[III, 2.1.4]{lazard}, we may take $(p^k)$th roots inside $H$ to see that $x^{-1}hx = h$. In other words, $x\in \mathbf{C}_\Delta(H)$.

Now suppose further that $H$ contains $\Delta$. Then $x\in Z(H)$. In fact, as we have $Z(H) \leq \Delta(H)$ by definition and $\Delta(H) \subseteq \Delta$ by Lemma \ref{lem: Delta and Delta+}(ii), we see that $\Delta$ is all of the centre of $H$. This establishes (ii).

To prove (i), let $N$ be an open normal nilpotent uniform subgroup \cite[4.1]{DDMS} of $G$. Form $H = N\Delta$, again an open normal subgroup of $G$. The first paragraph above shows that $[N,\Delta] = 1$; we also know from Lemma \ref{lem: Delta and Delta+}(v) that $\Delta$ is abelian and $N$ is nilpotent. This forces $H$ to be nilpotent and open in $G$, and to contain $\Delta$ in its centre. 

It remains only to show that $H$ is uniform. As $H$ is nilpotent, its set $t(H)$ of torsion elements forms a normal subgroup \cite[5.2.7]{rob}, and if $t(H)$ is non-trivial then $t(H) \cap Z(H)$ must be non-trivial \cite[5.2.1]{rob}; but $Z(H) = \Delta$ is torsion-free by Lemma \ref{lem: Delta and Delta+}(v), so $H$ must be torsion-free. Now it is easy to check that $H$ is powerful as in \cite[3.1]{DDMS}, so that $H$ is uniform by \cite[4.5]{DDMS}.
\end{proof}

%
%

\begin{lem}\label{lem: S(G) has a unique max element}
Let $G$ be a nilpotent-by-finite compact $p$-adic analytic group. Then $\mathcal{S}(G)$ is closed under finite joins, and hence contains a \emph{unique} maximal element $H$, which is characteristic as a subgroup of $G$.
\end{lem}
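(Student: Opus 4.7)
The plan is to reduce to $\Delta^+ = 1$ as in the paragraph preceding the lemma, and then to show that $\mathcal{S}(G)$ is closed under taking products of two elements. Once closure is established, the maximum condition on closed subgroups of $G$ furnishes a maximal element of $\mathcal{S}(G)$, and any two maximal elements $H, H'$ would both be strictly contained in $HH' \in \mathcal{S}(G)$, a contradiction; hence the maximum is unique. Since $\mathcal{S}(G)$ is defined by a property manifestly preserved under automorphisms of $G$, automorphisms permute $\mathcal{S}(G)$ and so fix its unique maximum, which is therefore characteristic.

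Under the hypothesis $\Delta^+ = 1$, Lemma \ref{lem: Delta and Delta+}(ii) simplifies membership in $\mathcal{S}(G)$ to the open normal subgroups of $G$ that are nilpotent and $p$-valuable. Given $H_1, H_2 \in \mathcal{S}(G)$, the product $H_1 H_2$ is clearly open and normal in $G$; it is pro-$p$, since $H_1 H_2 / H_2 \cong H_1 / (H_1 \cap H_2)$ exhibits it as an extension of a pro-$p$ group by a pro-$p$ group; and it is nilpotent by Fitting's theorem applied to the normal nilpotent subgroups $H_1, H_2 \lhd H_1 H_2$. By Lemma \ref{lem: wehrfritz}, $H_1 H_2$ will be $p$-valuable once I show it is torsion-free.

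The main obstacle is this last point. My strategy is to establish the identity $t(H) = \Delta^+(H)$ for every nilpotent compact $p$-adic analytic group $H$, where $t(H)$ denotes the abstract subgroup of torsion elements (which is a subgroup, by nilpotence of $H$). Any open uniform subgroup $U$ of $H$, existing by \cite[8.34]{DDMS}, is torsion-free by \cite[4.5]{DDMS}, so $t(H) \cap U = 1$ and the coset map $t(H) \to H/U$ is injective (if $x_1, x_2 \in t(H)$ and $x_1 U = x_2 U$ then $x_1^{-1} x_2 \in t(H) \cap U = 1$), making $t(H)$ finite. Then $t(H)$ is a finite characteristic subgroup of $H$, the conjugation action of $H$ on it factors through the finite group $\Aut(t(H))$, and so every element of $t(H)$ has open centraliser in $H$ and hence lies in $\Delta^+(H)$. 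Applying this identity to $H_1 H_2$ together with Lemma \ref{lem: Delta and Delta+}(ii) yields
\[ t(H_1 H_2) \;=\; \Delta^+(H_1 H_2) \;\leq\; \Delta^+(G) \;=\; 1, \]
so $H_1 H_2$ is torsion-free, $p$-valuable, and therefore in $\mathcal{S}(G)$, completing the proof of closure under products.
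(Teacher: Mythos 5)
Your proof is correct and takes essentially the same route as the paper: both reduce to the case $\Delta^+ = 1$, invoke Fitting's theorem to show the product of two elements of $\mathcal{S}(G)$ is nilpotent, and conclude it is torsion-free (hence $p$-valuable via Lemma~\ref{lem: wehrfritz}) by identifying the torsion subgroup with the finite radical. The only difference is that you spell out the identification $t(H_1H_2) = \Delta^+(H_1H_2)$ in full (finiteness of $t$ via a uniform open subgroup, then open centralisers), whereas the paper dispatches this with a citation to Robinson.
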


\begin{proof}
First, observe that, for an open normal subgroup $K$ of $G$, we have $K \in \mathcal{S}(G)$ if and only if $\overline{K}\in \mathcal{S}(\overline{G})$ (where bars denote quotient by $\Delta^+$). So we continue to assume without loss of generality that $\Delta^+ = 1$.

Suppose we are given $K, L\in \mathcal{S}(G)$: then it remains to show that $KL\in \mathcal{S}(G)$. As $K$ and $L$ are open and normal, it is obvious that $KL$ is too; and since $K$ and $L$ are also nilpotent, Fitting's theorem \cite[1B, Proposition 15]{segal-polycyclic} implies that $KL$ is nilpotent. But now, again by \cite[5.2.7]{rob}, $t(KL) = \Delta^+(KL) \leq \Delta^+ = 1$ -- that is, $KL$ is torsion-free, and hence $p$-valuable by Lemma \ref{lem: wehrfritz}.

Now let $H$ be a maximal element of $\mathcal{S}(G)$. Assume for contradiction that $H$ does not contain every other element of $\mathcal{S}(G)$ as a subgroup. Then we may pick some $L\in\mathcal{S}(G)$ not contained in $H$, and form $HL\in \mathcal{S}(G)$; but now $H \lneq HL$, a contradiction to the maximality of $H$. So $H$ must be the \emph{unique} maximal element of $\mathcal{S}(G)$.

As the set $\mathcal{S}(G)$ is invariant under automorphisms of $G$, this maximal element $H$ is characteristic in $G$.
\end{proof}

\begin{defn}
Let $G$ be a nilpotent-by-finite compact $p$-adic analytic group. Its \emph{finite-by-(nilpotent $p$-valuable) radical} $\fn(G)$ is the open characteristic subgroup defined in Lemma \ref{lem: S(G) has a unique max element}.
\end{defn}

\textit{Proof of Theorem C.} The uniqueness of $H = \fn(G)$ is the content of Lemma \ref{lem: S(G) has a unique max element}. As $H$ is a finite-by-nilpotent closed subgroup of $G$, Corollary \ref{cor: f-by-n implies o.s.} and Theorem B show that $H$ is contained in $\nio(G)$. The inclusion $\Delta\leq H$ follows from Lemma \ref{lem: there exists a large N}(i). Finally, the isomorphism from $\nio(G)/H$ to a subgroup of $t(\mathbb{Z}_p^\times)$ is induced by the map $$\xi_1: \nio(G)\to t(\mathbb{Z}_p^\times)$$ of Corollary \ref{cor: defn of xi_1}, whose kernel contains $H$. \qed

\textit{Proof of Theorem D.} This now follows from Theorem C together with Lemma \ref{lem: zetas are not all 1}.\qed

\bibliography{biblio}
\bibliographystyle{plain}

\end{document}